\theoremstyle{plain}
\newtheorem{theorem}{Theorem}[section]
\newtheorem{thm}[theorem]{Theorem}
\newtheorem{cor}[theorem]{Corollary}
\newtheorem{lem}[theorem]{Lemma}
\newtheorem{prop}[theorem]{Proposition}
\newtheorem{claim}[theorem]{Claim}
\theoremstyle{definition}
\newtheorem{defn}[theorem]{Definition}
\newtheorem{rmk}[theorem]{Remark}
\newtheorem{notat}[theorem]{Notation}
\theoremstyle{remark}
\newcommand{\marpar}[1]{}
\newcommand{\mni}{\medskip\noindent}
\newcommand{\PP}{\mathbb{P}}
\newcommand{\mc}{\mathcal}
\newcommand{\mf}{\mathfrak}
\newcommand{\OO}{\mc{O}}
\newcommand{\SP}{\text{Spec }}
\newcommand{\pis}[1]{\text{Pseudo}_{#1}}
\newsavebox{\sembox}
\newlength{\semwidth}
\newlength{\boxwidth}
\newcommand{\Sem}[1]{%
\sbox{\sembox}{\ensuremath{#1}}%
\settowidth{\semwidth}{\usebox{\sembox}}%
\sbox{\sembox}{\ensuremath{\left[\usebox{\sembox}\right]}}%
\settowidth{\boxwidth}{\usebox{\sembox}}%
\addtolength{\boxwidth}{-\semwidth}%
\left[\hspace{-0.3\boxwidth}%
\usebox{\sembox}%
\hspace{-0.3\boxwidth}\right]%
}
\newsavebox{\semrbox}
\newlength{\semrwidth}
\newlength{\boxrwidth}
\newcommand{\Semr}[1]{%
\sbox{\semrbox}{\ensuremath{#1}}%
\settowidth{\semrwidth}{\usebox{\semrbox}}%
\sbox{\semrbox}{\ensuremath{\left(\usebox{\semrbox}\right)}}%
\settowidth{\boxrwidth}{\usebox{\semrbox}}%
\addtolength{\boxrwidth}{-\semrwidth}%
\left(\hspace{-0.3\boxrwidth}%
\usebox{\semrbox}%
\hspace{-0.3\boxrwidth}\right)%
}
\def\Hilb{{Hilb}}
\title
{A local-global principle for weak approximation on varieties over
  function fields}
\author[Roth]{Mike Roth}
\address{
Department of Mathematics and Statistics \\
Queen's University \\ 
Kingston, Ontario, K7L 3N6}
\email{mikeroth@mast.queensu.ca}
\author[Starr]{Jason Michael Starr}
\address{Department of Mathematics \\
  Stony Brook University \\ Stony Brook, NY 11794}
\email{jstarr@math.sunysb.edu}
\date{\today}
\begin{document}


\begin{abstract}
We present a new perspective on the weak approximation conjecture of
Hassett and Tschinkel: formal sections of a rationally connected
fibration over a curve can be approximated to arbitrary order by
regular sections.  The new approach involves the study of how ideal
sheaves pullback to Cartier divisors.  
\end{abstract}


\maketitle



\section{Statement of results} \label{sec-intro}
\marpar{sec-intro}

\mni
\textbf{Disclaimer.}  This is the first draft of an article which was
circulated among experts and posted on the webpage of one of the
authors in January 2008.  A revision, with substantially stronger
results, is currently being drafted.  But as we have been informed
that these preliminary results are already useful in solving a related
problem of ``strong rational connectedness'' for surfaces, the authors
decided it was prudent to circulate this first draft more widely.

\mni
Let $k$ be an algebraically closed field.  Let $B$ be a smooth,
projective, connected curve over $k$.  

\begin{notat} \label{notat-rings}
\marpar{notat-rings}
For every closed point $b$ of
$B$, denote the 
Henselian local ring of $B$ at $b$ by $\OO_{B,b}^h$ and
denote its fraction field $K(\OO_{B,b}^h)$ by $K_{B,b}^h$.  Denote the
completion of the local ring by $\widehat{\OO}_{B,b}$, and denote its
fraction field by $\widehat{K}_{B,b}$.  

\mni
More generally, for an effective Cartier
divisor $E$ in $B$ with $\text{Supp}(E) =\{b_1,\dots,b_N\}$, denote by
$\OO_{B,E}$ the semilocal ring of $B$ at $\{b_1,\dots,b_N\}$, denote
by $\OO_{B,E}^h$ the product 
$\OO_{B,b_1}^h \times \dots \times \OO_{B,b_N}^h$, and
denote its total ring of fractions of $\OO_{B,E}^h$ by $K_{B,E}^h =
K_{B,b_1}^h \times \dots \times K_{B,b_N}^h$.  Denote by
$\widehat{\OO}_{B,E}$ the product $\widehat{\OO}_{B,b_1} \times \dots
\times \widehat{\OO}_{B,b_N}$, and denote by $\widehat{K}_{B,E}$ its
total ring of fractions $\widehat{K}_{B,E} \times \dots \times
\widehat{K}_{B,E}$.  
\end{notat}

\mni
For each of the $B$-algebras, $\OO = \OO_{B,E}$,
resp., $\OO = \OO_{B,E}^h$, $\OO = \widehat{\OO}_{B,E}$, the morphism
$$
\SP \OO \times_B E \rightarrow E
$$
is an isomorphism.  Thus we identify $E$ with its inverse image in
each of these affine $B$-schemes.    

\mni
Let $X$ be a normal, 
projective, connected $k$-scheme and let $\pi:X\rightarrow B$ be a
surjective $k$-morphism whose geometric generic fiber is normal.

\begin{defn} \label{defn-comblike}
\marpar{defn-comblike}
Let $s:B\rightarrow X$ be a section of $\pi$. 
A \emph{comb-like curve
  with handle $s$ in $X$} is a connected, closed curve $C$ in $X$ such
that $C\rightarrow X$ is a regular immersion
and such that the intersection of $C$ with the generic fiber of $B$
equals $s(\eta_B)$.
In other words, $s(B)$ is the only
irreducible component of $C$ dominating $B$, and $C$ is reduced at the
generic point of $s(B)$.  
\end{defn}

\begin{rmk} \label{rmk-comblike}
\marpar{rmk-comblike}
Among all comb-like curves are the \emph{combs} with handle $s$ as defined by
Koll\'ar-Miyaoka-Mori, and the \emph{combs with broken teeth} as defined by
Hassett-Tschinkel.  There are many comb-like curves which are neither
combs nor combs with broken teeth, most of which are
uninteresting.  We are interested only in those comb-like curves
which deform to smooth, connected curves.
This imposes many local constraints which we have not attempted to
characterize.  
\end{rmk}

\begin{defn} \label{defn-jet}
For an effective Cartier divisor $E$ in $B$, an \emph{$E$-section
  of $\pi$}, or just \emph{$E$-section}, 
is a $B$-morphism $s_E:E\rightarrow X$.  
A comb-like curve $C$ in $X$
\emph{deforms to section curves agreeing with $s_E$ Zariski locally},
resp. \emph{\'etale locally}, \emph{formally locally},
if for
$B' = \SP \OO_{B,E}$, resp. for $B'=\SP \OO_{B,E}^h$, $B'=\SP
\widehat{\OO}_{B,E}$, 
there exists a DVR $(R,\mf{n})$
with coefficient field $k$ 
and an $R$-flat, closed subscheme $\mc{C}\subset \SP R \times_{\SP k}
B' \times_B X$ 
such that
\begin{enumerate}
\item[(i)]
the fiber of $\mc{C}$ over the residue field
$\SP R/\mf{n}$ equals the base change of $C$ to
$B' \times_B X$, and
\item[(ii)]
the fiber of $\mc{C}$ over the fraction field $\SP K(R)$ equals the image
of a section $s_\eta:\SP K(R) \times_{\SP k} B' \rightarrow
\SP K(R) \times_{\SP k} B' \times_B X$ whose restriction to 
$\SP K(R) \times_{\SP k} E$ equals the
base change of $s_E$.  
\end{enumerate}
In other words, on a Zariski neighborhood, resp. 
\'etale neighborhood, formal neighborhood, of the fiber $\pi^{-1}(E)$
in $X$, $C'$ deforms to section curves all of which contain $s_E(E)$. 
\end{defn}

\begin{rmk} \label{rmk-jet}
\marpar{rmk-jet}
Let $E = E_1 \sqcup \dots \sqcup E_N$ be the connected components of
$E$.  Then $C$ deforms to section curves agreeing with $s_E$ \'etale
locally, resp. formally locally,  
if and only if for every $i=1,\dots,N$, $C$ deforms to section
curves agreeing with $s_E|_{E_i}$ \'etale locally, resp. formally
locally.  
Thus the \'etale and formal local conditions
can each be studied fiber-by-fiber.
\end{rmk}

\begin{thm} \label{thm-main}
\marpar{thm-main}
Let $s:B\rightarrow X$ be a section of $\pi$ mapping the geometric
generic point of $B$ into the very free locus of the geometric generic
fiber of $\pi$. (In particular, the very free locus is nonempty and
$\pi$ is a separably rationally connected fibration.)  
Let $E$ be an effective Cartier divisor in $B$.  Let
$s_E$ be an $E$-section of $\pi$.  Let $C$ be a comb-like curve with
handle $s$.  Assume $C$ deforms to section curves agreeing with $s_E$
\'etale locally or formally locally. 
Then $C$ deforms to section curves agreeing with
$s_E$ Zariski locally.  To be precise,
after attaching to $C$
sufficiently many very free teeth in fibers of $\pi$ at general points
of $s(B)$ and with general normal directions, $C$ deforms to comb-like
curves all of which are section
curves agreeing with $s_E$ over a Zariski open neighborhood of $E$ in $B$.    
\end{thm}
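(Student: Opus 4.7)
The plan is to cast the problem in terms of the Hilbert scheme of curves in $X$, using the pullback of ideal sheaves to interpret the ``agreement with $s_E$'' condition, and to exploit the positivity arising from attached very free teeth in order to show that a formal or \'etale local deformation lifts to a Zariski-local one. First I would attach to $C$ a collection of very free teeth $T_1,\dots,T_m$ in fibers of $\pi$ over general points $p_j \in B\setminus\text{Supp}(E)$, meeting $s(B)$ transversally with general normal directions. For $m$ large and teeth chosen generically, a Koll\'ar--Miyaoka--Mori style calculation shows that the normal sheaf $N_{C'/X}$ of the resulting comb-like curve $C'$ is globally generated and satisfies $H^1(C', N_{C'/X}(-D))=0$ for any effective divisor $D$ bounded in terms of the input data. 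The relevant $D$ below will be a high-order jet divisor supported at the points of $s(B)\cap \pi^{-1}(\text{Supp}(E))$, reflecting how the condition of agreeing with $s_E$ concentrates at those fibers.

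Next I would translate agreement with $s_E$ into a condition on $\Hilb(X)$. Following the slogan of the abstract, for a flat family $\mc{C}'$ deforming $C'$ in $X$, form the pullback ideal $\mc{I}_{s_E(E)}\cdot\OO_{\mc{C}'}$ and require it to cut out a Cartier divisor on $\mc{C}'$ whose image in $B$ is the (base-changed) divisor $E$. This defines a closed subscheme $H'\subset \Hilb(X)$ containing $[C']$, together with a natural forgetful morphism from $H'$ to a ``local'' Hilbert scheme parametrizing deformations only in a formal neighborhood of $\pi^{-1}(E)$. The tangent space to $H'$ at $[C']$ should identify with those global sections of $N_{C'/X}$ satisfying a prescribed jet condition at each point of $s(B)\cap\pi^{-1}(\text{Supp}(E))$, determined by $s_E$ via the pullback ideal; dually, obstructions live in the corresponding $H^1$.

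Thanks to the vanishing arranged in the first step, both the tangent and obstruction calculations go through, so $H'$ is smooth at $[C']$ and the forgetful morphism is smooth there as well. The \'etale or formal hypothesis provides a point of the local Hilbert scheme, over $\Spec R$ for some DVR $R$, whose generic fiber is a section curve agreeing with $s_E$. Smoothness of the forgetful map, combined with Artin approximation for the $k$-scheme $H'$ of finite type, lifts this data to a deformation in $H'$ itself. The standard comb-smoothing theorem, applicable because we have attached enough very free teeth, then guarantees that the generic fiber of the lifted family is a smooth section curve, and restricting to the open set $\Spec \OO_{B,E}\subset B$ yields the desired Zariski-local deformation.

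The main obstacle is the middle step: identifying $H'$ and its deformation theory precisely. One must verify that the pullback ideal $\mc{I}_{s_E(E)}\cdot\OO_{C'}$, together with the Cartier-divisor requirement, really cuts out the correct closed subscheme of the Hilbert scheme, and that the resulting obstructions live in a cohomology group that can be killed by adjoining more very free teeth. Both require a careful local analysis at each point of $s(B)$ lying above $\text{Supp}(E)$, studying how $C'$ meets the vertical fibers there and how $\mc{I}_{s_E(E)}$ pulls back through those (possibly singular) points of $C'$; the remaining steps, though technical, should follow from the Koll\'ar--Miyaoka--Mori machinery together with Artin approximation.
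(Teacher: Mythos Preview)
Your overall architecture matches the paper's: attach very free teeth to kill an $H^1$, interpret the problem as smoothness of a map out of $\text{Hilb}_{X/k}$, and lift the given local data along that smooth map. The obstruction group $H^1(C', N_{C'/X}(-D))$ is also essentially the right one. But the middle step you flag as the main obstacle hides two genuine gaps, and resolving them is where the actual content of the paper lies.

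First, your subscheme $H'$ need not contain $[C']$. Requiring $\mc{I}_{s_E(E)}\cdot\OO_{C'}$ to cut out a Cartier divisor mapping to $E$ forces $C'$ to meet $s_E(E)$ with prescribed multiplicities, but nothing in the hypotheses says $C$ is anywhere near $s_E(E)$: we are only told that $C$ \emph{deforms} (\'etale or formally locally) to section curves agreeing with $s_E$. If $s(b)\neq s_E(b)$ for each $b\in\text{Supp}(E)$ and $C$ has no vertical component over $\text{Supp}(E)$, then $C'\cap s_E(E)=\emptyset$, the pullback ideal is all of $\OO_{C'}$, and $[C']\notin H'$. Any construction that encodes $s_E$ into the \emph{source} of the comparison map will hit this problem; the $s_E$-condition has to live in the target. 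Second, your proposed target, a ``local Hilbert scheme'' parametrizing deformations in a formal neighborhood of $\pi^{-1}(E)$, is not a finite-type $k$-scheme, so neither smoothness nor Artin approximation applies to it directly. The ``careful local analysis'' you anticipate will not, by itself, produce the needed algebraic target.

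The paper's construction handles both issues at once. Enlarge $E$ to $E'$ so that for $D=\pi^{-1}(E')$ one has $C\subset s(B)\cup D$, and restrict ideal sheaves to $D$. Restriction destroys injectivity, so $\mc{I}_C|_D\to\OO_D$ is not an ideal sheaf but only a \emph{pseudo-ideal sheaf}: a flat sheaf $\mc{F}$ with a map $u:\mc{F}\to\OO_D$ satisfying $u(f_1)f_2=u(f_2)f_1$. Such pairs are shown to form an algebraic stack $\pis{D/k}$, locally of finite type over $k$ and containing $\text{Hilb}_{D/k}$ as an open substack, and restriction defines a morphism $\iota_D:\text{Hilb}_{X/k}\to\pis{D/k}$ whose relative obstruction at a regularly embedded $[C']$ lies in $H^1(C',\OO_X(-D)\cdot N_{C'/X})$. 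Now $\iota_D$ is defined on all of $\text{Hilb}_{X/k}$ with no reference to $s_E$, so $[C']$ is in its domain; and since $D$ sits identically inside $X$ and inside each of its \'etale or formal localizations along $E$, the hypothesized local deformation of $C$ restricts losslessly to an honest finite-type family in $\pis{D/k}$, with no approximation step needed. Smoothness of $\iota_D$ at $[C']$, obtained from the tooth-attaching argument, then lifts this family back to $\text{Hilb}_{X/k}$.
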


\mni
This reduces weak approximation to the problem of finding comb-like
curves $C$ with handle $s$ deforming to section curves agreeing with $s_E$
either \'etale locally or formally locally.  There is a criterion
due to Hassett guaranteeing existence of such curves.  
Originally formulated as a
criterion in characteristic $0$, the extension to positive
characteristic poses some minor issue in characteristics $2$, $3$ and
$5$.  In order to avoid these issues, we use a slight trick.

\begin{defn} \label{defn-conic}
\marpar{defn-conic}
Let $B$ be a smooth, connected, projective $k$-curve and let $E$ be an
effective Cartier divisor in $B$.  An \emph{$E$-conic} is a
smooth, projective $k$-surface $S$ together with a birational morphism
$\nu:P\rightarrow \PP^1\times_k B$ such that the restriction
$$
\nu:\nu{-1}(\PP^1\times_k(B-E)) \rightarrow \PP^1 \times_k (B-E)
$$
is an isomorphism.  The \emph{zero section} of $P$ is the unique
$B$-morphism
$$
0_P:B\rightarrow P
$$
whose composition with $\nu$ is the zero section of $\PP^1 \times_k
B$.  
\end{defn}

\begin{rmk} \label{rmk-conic}
\textbf{(i)}
Given a section $s$ and an $E$-section $s_E$, suppose there exists an
$E$-conic $\nu:P\rightarrow X\times_B \PP^1 $ and a comb-like curve
$C$ in $X\times_B \PP^1$ with handle $(s,0_P)$ which \'etale locally
or formally locally deforms to section curves agreeing with a given
$E$-section $(s_E,t_E)$.  Then Theorem ~\ref{thm-main} applies to
produce a section in $X\times_B \PP^1$ agreeing with $(s_E,t_E)$.
Thus the projection into $X$ is a section of $\pi$ agreeing with
$s_E$.  Therefore, to prove the existence of sections of $\pi$
agreeing with $s_E$, it suffices to prove the local condition for
$X\times_B \PP^1$

\mni
\textbf{(ii)}
Also,for a divisor $E$ with support $\{b_1,\dots,b_N\}$, 
the modification $\nu:P \rightarrow \PP^1\times_k B$ and the 
comb-like curve $C$ can both be defined in the neighborhood of every
fiber $\pi^{-1}(b_i)$ and then these local pieces can be assembled
into a global modification, resp. a global comb-like curve.  Thus the
existence of $\nu$ and $C$ with the required properties can be
verified near each fiber $\pi^{-1}(b_i)$ separately.
\end{rmk}

\begin{prop}[Hassett] \label{prop-Hassett}
\marpar{prop-Hassett}
Let $B$ be a smooth, connected, projective $k$-curve, let $X$ be a
normal, projective, connected $k$-scheme, and let $\pi:X\rightarrow B$
be a surjective $k$-morphism.  Let $s:B\rightarrow X$ be a section of
$\pi$ whose image $s(B)$ is contained in the smooth locus of $\pi$.
Let $E$ be an effective Cartier divisor in $B$ supported at a single
point $b$.  Let $\OO$ denote $\OO_{B,b}^h$, resp. $\widehat{\OO}_{B,b}$,
and let $K$ denote $K_{B,b}^h$, resp. $\widehat{K}_{B,b}$.  Let $s_E$ be an
$E$-section of $\pi$. 

\mni
Roughly stated, 
if the \'etale, resp. formal, germ of $s$ over $b$ gives a
$K$-point of $X(K)$ which is $R$-equivalent to the $K$-point of a germ
agreeing with $s_E$, then for some $E$-conic $P$, there exists a
comb-like curve $C$ in $X\times_B P$ with handle $(s,0_P)$ which
\'etale locally, resp. formally locally, deforms to section curves
agreeing with an $E$-section $(s_E,t_E)$.  

\mni
Following is one precise formulation (certainly not the most general).
Assume there exists a morphism of $K$-schemes
$$
f:\PP^1_{K} \rightarrow \SP K \times_B X
$$
whose closure $S$ in $\SP \OO \times_B X$ intersects the closed
fiber $\pi^{-1}(b)$ only at smooth points of $X$ and sending the
$K$-points $0$, resp. $\infty$, of $\PP^1_{K}(K)$ to the $K$-point of
the germ of $s$, 
resp. 
to a $K$-point whose unique extension
$$
s':\SP \OO \rightarrow \SP \OO \times_B X
$$
agrees with $s_E$.  Then there exists an $E$-conic $P$ and a comb-like
curve $C$ in $X\times_B P$ with handle $(s,0_P)$
which \'etale locally, resp. formally locally, deforms 
to section curves agreeing with an $E$-section $(s_E,t_E)$.

\mni
Moreover, if $f$ is an immersion and if the closed fiber of $S$ is
smooth, then there exists a comb-like curve $C$ in $X$ with handle $s$
which \'etale locally, resp. formally locally, deforms to section
curves agreeing with $s_E$ (i.e., there is no need for the $E$-conic).
\end{prop}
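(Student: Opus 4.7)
The plan is to exploit both the surface $S$ (closure of the image of $f$) and the closure $\bar\Gamma$ of the graph of $f$: the latter provides the $E$-conic $P$ and an extension of $f$ to a morphism on $P$, while the former provides the tooth of the comb-like curve $C$ in $X\times_B P$. The deformation of $C$ to section curves will come from varying the $\PP^1$-parameter of $f$, with the section at $u=\infty$ agreeing with $s_E$ by construction.

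To build $P$, I would take $\bar\Gamma\subset\PP^1_{\OO}\times_{\SP\OO}X_{\OO}$ to be the scheme-theoretic closure of the graph of $f$; its generic fiber over $\SP\OO$ is $\PP^1_K$, and after resolving singularities one obtains a smooth surface $P_{\OO}$ and a birational morphism $\nu_{\OO}:P_{\OO}\to\PP^1_{\OO}$ which is an isomorphism on the generic fiber. Since $f(0)$ and $f(\infty)$ extend cleanly to $s$ and $s'$ by hypothesis, $\nu_{\OO}$ is already an isomorphism near the points $0_P(b)$ and $\infty_P(b)$, so gluing $P_{\OO}$ to $\PP^1\times_k(B-\{b\})$ produces an honest $E$-conic $\nu:P\to\PP^1\times_k B$ with well-defined zero and infinity sections, and the resolution simultaneously yields a morphism $g:P_{\OO}\to X_{\OO}$ extending $f$. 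I would then define the comb-like curve $C=H\cup T\subset X\times_B P$ by taking $H=(s,0_P)(B)$ as the handle, and the tooth $T$ to be the graph of $g|_{P_b}$ inside the fiber $(X\times_B P)|_b=P_b\times\pi^{-1}(b)$. The identity $g(0_P(b))=s(b)$ makes $C$ connected at $(s(b),0_P(b))$, and the hypothesis that $S$ meets $\pi^{-1}(b)$ only at smooth points of $X$, together with a simple transversality check (the tangent to $H$ has nonzero $B$-component while that of $T$ does not), shows that $C\hookrightarrow X\times_B P$ is a regular immersion.

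For the deformation $\mc{C}$, I would consider the family of sections $\tau_u=(\sigma_u,f(u)):B\to X\times_B P$ parameterized by $u\in\PP^1_K(K)$, where $\sigma_u$ extends the constant section of $\PP^1\times B\to B$ through $\nu$ by properness and $f(u)\in X_K(K)$ extends by properness of $\pi$ to a section over $\SP\OO$. Then $\tau_0=(0_P,s)$ is the handle, and setting $t_E=\infty_P|_E$, we have $\tau_\infty|_E=(s_E,t_E)$. Pulled back along a suitable DVR $R$, the family determines a closed subscheme $\mc{C}\subset\SP R\times_{\SP k}B'\times_B(X\times_B P)$ whose generic fiber is a section agreeing with $(s_E,t_E)$ on $E$. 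The main technical obstacle is arranging that the flat limit at the closed point of $R$ is exactly $H\cup T$ rather than just $H$ with some multiplicity: the naive limit $u\to 0$ in $\PP^1_K$ gives only the handle, so one must blow up the parameter space, or equivalently choose $R$ as the local ring at an appropriate divisor on $P_{\OO}$, so that the tooth is ``peeled off'' as a reduced residual component of the special fiber.

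For the last part (no $E$-conic needed), the additional hypotheses that $f$ is an immersion and that $S_b$ is smooth imply that $S\to\SP\OO$ is already smooth near $b$, so no resolution of $\bar\Gamma$ is required: one may take $P=\PP^1\times_k B$ with $g=f$ extending cleanly, and the tooth reduces to the smooth rational curve $S_b\subset\pi^{-1}(b)$ itself. The comb-like curve is then $C=s(B)\cup S_b\subset X$ directly (handle plus one smooth tooth attached at the smooth point $s(b)$), and the same family $\{f(u)\}$ of sections provides the required deformation of $C$ to section curves agreeing with $s_E$.
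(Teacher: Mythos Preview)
Your construction of the $E$-conic $P$ by resolving the closure of the graph of $f$ is correct and is exactly what the paper does. Likewise, using the graph morphism $\Gamma_{f_P}:\SP\OO\times_B P\to\SP\OO\times_B(X\times_B P)$ to transport curves from $P$ into $X\times_B P$ is the right device, and the paper uses it too.

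The genuine gap is in the deformation. Your family $\tau_u$, indexed by $u\in\PP^1_K(K)$, does not do what is required: with $t_E=\infty_P|_E$, for generic $u$ the section $\sigma_u$ of $P\to B$ has $\sigma_u|_E\neq\infty_P|_E$, hence $\tau_u|_E\neq(s_E,t_E)$. Only the single member $\tau_\infty$ satisfies the agreement condition, and one section cannot supply a DVR family whose \emph{closed} fiber is the comb $C$ and whose \emph{generic} fiber agrees with $(s_E,t_E)$ on $E$. The flat-limit problem you flag (arranging the tooth to appear in the limit as $u\to 0$) is real but secondary: even if you solve it by blowing up the parameter, the generic member of your family still fails condition~(ii) of the definition of ``deforms to section curves agreeing with $s_E$''. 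The same objection applies verbatim to your last paragraph on the no-$E$-conic case.

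The paper's remedy is a pencil argument on the surface $P$. One finds an auxiliary effective divisor $\Delta$ on $B$ away from $b$ and a vertical effective divisor $B'$ on $P$ with
\[
0_P(B)+\pi^*E+B'\ \sim\ \infty_P(B)+\pi^*\Delta,
\]
takes the comb-like curve in $P$ to be $C_P=0_P(B)+\pi^*E+B'$, and uses the pencil spanned by $C_P$ and $\infty_P(B)+\pi^*\Delta$ as the deformation. Since $\infty_P(B)\cap\pi^*E=\infty_P(E)$ lies in the base locus of this pencil, \emph{every} member agrees with $\infty_P$ over $E$, while the general member is a section curve near $b$. Transporting through $\Gamma_{f_P}$ then yields the required deformation in $X\times_B P$. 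The point is that the extra vertical summand $\pi^*E$ in $C_P$ is precisely what forces the base locus to contain $\infty_P(E)$; your proposed tooth $T=\Gamma_g(P_b)$ corresponds to a single reduced copy of the central fiber, and your family $\{\tau_u\}$ is essentially the pencil in $|0_P(B)|$ rather than in $|0_P(B)+\pi^*E+B'|$, which is why it misses the agreement condition.
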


\mni
When combined with Proposition ~\ref{prop-Hassett}, Theorem
~\ref{thm-main} unifies and extends several known results regarding
weak approximation.  Most of these follow immediately; we content
ourselves with the following.

\begin{cor}\cite{HT06} \label{cor-HT}
\marpar{cor-HT}
With hypotheses as in Theorem ~\ref{thm-main}, assume also that $s(B)$ is
contained in the smooth locus of $X$.  Let $E$ be an effective Cartier
divisor in $B$ and let $s_E$ be an $E$-section of $\pi$.  If the
relative dimension of $\pi$ is $\geq 3$ and if every fiber
$\pi^{-1}(b)$ in $\pi^{-1}(E)$ is smooth and separably rationally connected,
then there exists a comb-like curve $C$
with handle $s$ which Zariski locally deforms to
section curves agreeing with $s_E$.  If the relative dimension is
$\leq 2$ and if every fiber $\pi^{-1}(b)$ in $\pi^{-1}(E)$ is smooth
and rationally connected, then for the $E$-conic $P = B\times_k \PP^1$
there exists a comb-like curve $C$ in $X\times_B P$ with
handle $(s,0_P)$ which Zariski locally deforms to section curves
agreeing with an $E$-section $(s_E,t_E)$.  

\mni
More generally, the two conclusions above
hold if for every point $b$ in $\text{Supp}(E)$ there exists an
immersed, resp. not necessarily immersed, very
free 
rational curve in the smooth locus of $\pi^{-1}(b)$ containing both
$s(b)$ and $s_E(b)$ (possibly both the same point).  
\end{cor}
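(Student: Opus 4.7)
The strategy is to build, fiber by fiber over the points of $\text{Supp}(E)$, the rational curve $f$ demanded by Proposition~\ref{prop-Hassett}, deduce from Proposition~\ref{prop-Hassett} a comb-like curve whose deformation to a section agreeing with $s_E$ is verified \'etale (or formally) locally, and then invoke Theorem~\ref{thm-main} to upgrade this to a Zariski-local deformation. By Remark~\ref{rmk-jet} and Remark~\ref{rmk-conic}(ii), it suffices to work one point at a time; so fix $b\in\text{Supp}(E)$ and write $X_b=\pi^{-1}(b)$.

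First I would produce an appropriate very free rational curve in $X_b$ joining $s(b)$ and $s_E(b)$. Under the hypotheses of the first half of the corollary, the smooth fiber $X_b$ is separably rationally connected (or rationally connected), so the existence of such a curve is standard: if $\dim X_b\geq 3$, a theorem of Koll\'ar--Miyaoka--Mori (and its strengthening to two fixed points, cf.\ Koll\'ar's book) gives a very free rational curve through the two specified points which may be taken to be an immersion; if $\dim X_b\leq 2$, the same machinery gives a very free rational curve through the two points, but one cannot in general arrange the immersion property. In the ``more generally'' clauses we simply start from the given very free curve, so this step is replaced by the hypothesis. Note that because $s(B)$ lies in the smooth locus of $X$ and $X_b$ is smooth, the curve lies in the smooth locus of $X$.

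Next I would spread this special curve out to a curve over $\Spec K$. Since the curve is very free in $X_b$, its deformation space is smooth of the expected dimension even after imposing the two incidence conditions ``pass through $s(b)$'' and ``pass through $s_E(b)$'' and the condition that one of the marked points lie on the handle $s(B)$ while the other extends to a morphism $\Spec\OO\to\Spec\OO\times_BX$ agreeing with $s_E$. Deforming the curve from $X_b$ to the generic fiber over $\Spec\OO$ therefore produces a morphism $f:\PP^1_K\to\Spec K\times_BX$ whose closure $S$ in $\Spec\OO\times_BX$ meets $X_b$ only at smooth points of $X$, sends $0$ to the germ of $s$, and sends $\infty$ to a $K$-point that extends uniquely to a section agreeing with $s_E$; when the original fiberwise curve was an immersion with smooth image, so is $f$ and the closed fiber of $S$.

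Having produced $f$, I would apply Proposition~\ref{prop-Hassett}: in the immersed case (relative dimension $\geq 3$, or the ``immersed'' clause of the ``more generally'' statement) the ``Moreover'' part gives directly a comb-like curve $C$ in $X$ with handle $s$ that deforms \'etale (or formally) locally to section curves agreeing with $s_E$; in the remaining case one obtains an $E$-conic $P$ and a comb-like curve $C$ in $X\times_B P$ with handle $(s,0_P)$ deforming \'etale (or formally) locally to section curves agreeing with an $E$-section $(s_E,t_E)$. Finally, apply Theorem~\ref{thm-main} to $X$, respectively $X\times_B P\to B$, to conclude that after attaching sufficiently many very free teeth in fibers at general points of the handle with general normal directions, $C$ deforms Zariski locally to section curves agreeing with $s_E$, respectively $(s_E,t_E)$; in the latter case the projection to $X$ is the desired section agreeing with $s_E$.

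The main technical obstacle is the fiberwise step: producing an immersed very free rational curve through two prescribed points when $\dim X_b\geq 3$, and more generally checking that the fiberwise very free curve can be deformed out of $X_b$ to a $K$-morphism satisfying the precise incidence and smoothness conditions required by Proposition~\ref{prop-Hassett}. Once this is in place, the passage from \'etale/formal to Zariski local deformations is delivered by Theorem~\ref{thm-main}, and the $E$-conic trick from Remark~\ref{rmk-conic} takes care of the low-dimensional case where immersion fails.
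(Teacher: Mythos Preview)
Your proposal is correct and follows essentially the same line as the paper's proof: reduce the first paragraph to the second via the standard fact that two points on a smooth separably rationally connected variety lie on a very free rational curve (immersed if the dimension is $\geq 3$), then use very freeness of the fiberwise curve to see that its deformations in fibers of $\pi$ through $s$ and through a section agreeing with $s_E$ are unobstructed, yielding the $K$-morphism $f$ required by Proposition~\ref{prop-Hassett}, and conclude via Theorem~\ref{thm-main}.

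One small point worth tightening: the statement in the low-dimensional case asserts specifically that the $E$-conic can be taken to be the trivial one $P=B\times_k\PP^1$, not merely ``some $E$-conic.'' Your deformation argument actually gives more than a $K$-morphism $f:\PP^1_K\to\Spec K\times_BX$: since the very free curve deforms unobstructedly over $\Spec\OO$, you obtain an $\OO$-morphism $f_\OO:\Spec\OO\times_k\PP^1\to\Spec\OO\times_BX$ with smooth closed fiber, so the resolution step in the proof of Proposition~\ref{prop-Hassett} is unnecessary and $P=B\times_k\PP^1$ suffices. The paper's proof makes this explicit by constructing $f_\OO$ directly rather than passing through $f$ over $K$.
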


\begin{cor} \label{cor-rational}
\marpar{cor-rational}
With hypotheses as in Theorem ~\ref{thm-main}, assume also that $s(B)$
is in the smooth locus of $X$.  Let
$E$ be an effective Cartier divisor in $B$ such that $X$ is smooth
along $\pi^{-1}(E)$.  Assume that for each $b$ in $\text{Supp}(E)$, either
$\SP \widehat{K}_{B,b} \times_B X$ is
$\widehat{K}_{B,b}$-rational or $\widehat{K}_{B,b}$-stably rational. 
Then for
every $E$-section $s_E$ there exists an $E$-conic $P$ over $B$ and a 
comb-like curve $C$ in $X\times_B P$ with
handle $(s,0_P)$ which Zariski locally deforms to section curves
agreeing with an $E$-section $(s_E,t_E)$.  
\end{cor}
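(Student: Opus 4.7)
Plan. By Remark~\ref{rmk-jet} and Remark~\ref{rmk-conic}(ii), both the $E$-conic and the comb-like curve may be constructed fiber-by-fiber, so I reduce immediately to the case that $E$ is supported at a single point $b\in B$; write $\widehat{\OO}=\widehat{\OO}_{B,b}$ and $\widehat{K}=\widehat{K}_{B,b}$. Since $X$ is smooth along $\pi^{-1}(E)$, the morphism $\pi$ is smooth on a Zariski neighborhood of $\pi^{-1}(b)$, and the infinitesimal lifting criterion for smooth morphisms produces an extension $s'\colon\SP\widehat{\OO}\to X$ of $s_E$ over $B$. Let $p_{s'}\in X(\widehat{K})$ be the restriction of $s'$ to the generic point and let $p_s\in X(\widehat{K})$ be the $\widehat{K}$-point determined by the formal germ of $s$ at $b$. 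Properness of $\pi$ forces every point of $X_{\widehat{K}}:=X\times_B\SP\widehat{K}$ to specialize into $\pi^{-1}(b)$, and hence into the smooth neighborhood, so $X_{\widehat{K}}$ is a smooth projective $\widehat{K}$-variety.

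The core claim is that $p_s\sim_R p_{s'}$ in $X(\widehat{K})$. If $X_{\widehat{K}}$ is $\widehat{K}$-rational, this is immediate from the birational invariance of $R$-equivalence on smooth projective varieties together with the triviality $\PP^n_{\widehat{K}}(\widehat{K})/R=\{*\}$ (any two $\widehat{K}$-points of $\PP^n_{\widehat{K}}$ lie on a common line). If $X_{\widehat{K}}$ is merely $\widehat{K}$-stably rational, say $X_{\widehat{K}}\times\PP^m$ is rational, then the rational case gives $(p_s,0)\sim_R(p_{s'},0)$ in $X_{\widehat{K}}\times\PP^m$, and projecting the resulting chain of rational curves to the $X_{\widehat{K}}$-factor (contracting any constant components) yields $p_s\sim_R p_{s'}$. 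With $R$-equivalence in hand, Proposition~\ref{prop-Hassett}, in its informal $R$-equivalence formulation (applied iteratively if the chain has more than one link), furnishes an $E$-conic $P$ over $B$ and a comb-like curve $C$ in $X\times_B P$ with handle $(s,0_P)$ that formally locally deforms to section curves agreeing with some $E$-section $(s_E,t_E)$.

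Theorem~\ref{thm-main} is then applied to the morphism $X\times_B P\to B$ (after normalizing if necessary): the geometric generic fiber $X_{\bar\eta}\times\PP^1$ is normal, and the handle $(s,0_P)$ maps $\bar\eta_B$ into the very free locus because composing a very free rational curve in $X_{\bar\eta}$ through $s(\bar\eta_B)$ with the identity $\PP^1\to\PP^1$ in the second factor produces a rational curve $f\colon\PP^1\to X_{\bar\eta}\times\PP^1$ with $f^*T_{X_{\bar\eta}\times\PP^1}$ ample. The theorem's conclusion upgrades the formal local deformation to the desired Zariski local deformation, completing the proof. The principal obstacle is the transition from $R$-equivalence (a chain of rational curves) to the single-$\PP^1$ input envisioned by the precise version of Proposition~\ref{prop-Hassett}: one relies on the informal $R$-equivalence formulation or, equivalently, iterates the construction of an $E$-conic and comb-like curve one link of the chain at a time and glues the resulting local data near $b$.
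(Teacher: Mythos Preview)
Your argument is correct in outline but leans on the rough, chain-tolerant formulation of Proposition~\ref{prop-Hassett}, whose proof in the paper covers only the single-link precise version. The paper's own proof sidesteps the chain problem entirely: rather than quoting birational invariance of $R$-equivalence abstractly, it uses the birational isomorphism concretely. Choose isomorphic dense opens $U\subset\PP^n_{\widehat K}$ and $V\subset \SP\widehat K\times_B(X\times_k\PP^r)$ (with $r=0$ in the rational case). After deforming $s$ so that the $\widehat K$-point of its formal germ lies in $V$, and choosing a formal lift of $s_E$ whose $\widehat K$-point also lies in $V$, the two corresponding points of $U\subset\PP^n_{\widehat K}$ lie on a single line $\PP^1_{\widehat K}$. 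Transporting this line back through the isomorphism and extending by properness yields \emph{one} morphism $f\colon\PP^1_{\widehat K}\to\SP\widehat K\times_B(X\times_k\PP^r)$, to which the precise Proposition~\ref{prop-Hassett} applies directly (smoothness of $X$ along $\pi^{-1}(E)$ ensures the closure meets only smooth points); Theorem~\ref{thm-main} then finishes. The trade-off is that in the stably rational case the paper stays in $X\times_k\PP^r$ rather than projecting back to $X$. Your projection argument does establish $R$-equivalence in $X(\widehat K)$, but reintroduces the chain problem, and the iteration/gluing you sketch at the end is not routine: each application of Proposition~\ref{prop-Hassett} changes both the ambient $E$-conic and the target section germ, so assembling them into a single comb-like curve with handle $(s,0_P)$ takes real work. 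The paper's single-line trick buys a genuinely cleaner argument.
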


\mni
In particular, for a smooth cubic surface over a local field
specializing to a cubic surface with only rational double points and
no more than $3$ of these, the cubic surface is rational.  

\begin{cor}\cite{HT07} \label{cor-cubic}
\marpar{cor-cubic}
Let $S$ be a smooth
cubic surface over $\SP k\Semr{t}$ extending to a flat, proper
scheme over $\SP k\Sem{t}$ whose closed fiber is a cubic surface
with only rational double
points, and at most $3$ of these.  Then $S$ is $k\Semr{t}$-rational.   

\mni
Assume $X$ is smooth and the geometric generic fiber of $\pi$ is a smooth
cubic surface.  Let $s$ be a section of $\pi$.  Let $E$ be a Cartier
divisor in $B$ such that for every point $b$ in $\text{Supp}(E)$ the
basechange $\SP \widehat{K}_{B,b}\times_B X$ extends to a flat, proper
scheme over $\SP \widehat{\OO}_{B,b}$ whose closed fiber is a cubic
surface with only rational double points, and at most $3$ of these.
Then for every $E$-section $s_E$, there exists an $E$-conic $P$
and a comb-like curve $C$ in $X\times_B P$ 
with handle $(s,0_P)$ which Zariski locally deforms 
to section curves agreeing with an $E$-section $(s_E,t_E)$.  
\end{cor}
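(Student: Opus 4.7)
Both parts of the corollary reduce to a single rationality statement: for $K = k\Semr{t}$ with $k$ algebraically closed, a smooth cubic surface $S_\eta$ over $K$ that extends to a flat proper model $\mathcal{S} \to \SP k\Sem{t}$ whose closed fibre $S_0$ is a cubic surface with only rational double points, at most three in total, is $K$-rational. Once this is established, the weak-approximation claim follows immediately from Corollary~\ref{cor-rational} applied at each $b \in \text{Supp}(E)$; the auxiliary hypothesis that $X$ be smooth along $\pi^{-1}(E)$ is already contained in the global assumption that $X$ is smooth, and the ``In particular'' statement is just the rationality claim applied to the single degeneration over $\SP k\Sem{t}$.

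\textbf{Rationality from one $K$-line.} The plan is to prove rationality by producing a single line $L \subset S_\eta$ defined over $K$. Projection from $L$ realises $S_\eta$ as a conic bundle $f \colon S_\eta \dashrightarrow \PP^1_K$: the pencil of planes in $\PP^3_K$ containing $L$ is parametrised by $\PP^1_K$, and each plane cuts $S_\eta$ in $L$ plus a residual conic. The generic conic of $f$ lives over $K(u) = k\Semr{t}(u)$, a $C_2$ field by Tsen-Lang (transcendence degree $2$ over the algebraically closed $k$), so every conic over it has a rational point. The resulting rational section of $f$ trivialises the generic conic, producing a $K$-birational equivalence $S_\eta \dashrightarrow \PP^1_K \times \PP^1_K$ and hence $K$-rationality.

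\textbf{Producing the $K$-line via Hensel.} Let $F \to \SP k\Sem{t}$ be the relative Hilbert scheme of lines in $\mathcal{S} \subset \PP^3_{k\Sem{t}}$. For a line $L_0 \subset S_0$ avoiding the (finitely many) singular points, both $S_0$ and $\mathcal{S}$ are smooth along $L_0$, and the normal bundle sequence
$$0 \to N_{L_0/S_0} \to N_{L_0/\mathcal{S}} \to N_{S_0/\mathcal{S}}|_{L_0} \to 0$$
reads $0 \to \cO_{L_0}(-1) \to N_{L_0/\mathcal{S}} \to \cO_{L_0} \to 0$, since $N_{S_0/\mathcal{S}} \cong \pi^*\cO_{\{0\}}$ is trivial. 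This yields $H^1(N_{L_0/\mathcal{S}}) = 0$ and $H^0(N_{L_0/S_0}) = 0$, so $F$ is étale over $\SP k\Sem{t}$ at $[L_0]$. Because $k$ is algebraically closed, Hensel's lemma lifts $[L_0]$ to a $k\Sem{t}$-section of $F$, producing a line of $S_\eta$ defined over $K$. The rationality question is thereby reduced to a single combinatorial statement: every cubic surface $S_0$ with at most three rational double points contains a line disjoint from its singular locus.

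\textbf{Main obstacle.} The entire argument rests on this last combinatorial point. For the ``mild'' singularity configurations (combinations of $A_1$'s up to $3A_1$, small $A_n$-combinations, $A_3$, $A_4$, $D_4$) it is immediate from the classical enumeration, since the total line count on $S_0$ comfortably exceeds the number of lines meeting the singular locus. The hard cases are the extremal configurations with very few lines, in particular an isolated $E_6$, $D_5$, or $A_5$, where every line of $S_0$ may pass through the singular point. Here the combinatorial existence must be verified directly on the minimal desingularisation $\tilde S_0$ (a weak del Pezzo of degree $3$), by enumerating the $(-1)$-curves disjoint from the exceptional $(-2)$-chain, and in the most recalcitrant cases by invoking the stable-rationality alternative in Corollary~\ref{cor-rational}---replacing the Hensel lift of a smooth-locus line with a deformation analysis of a line through the singularity on a semistable model of the family, which still delivers a conic-bundle structure on $S_\eta \times_K \PP^1$ over $K$ rendering this product $K$-rational.
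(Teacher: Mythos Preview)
Your approach is quite different from the paper's, and it contains two genuine gaps.

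\textbf{The conic-bundle step is not correct as written.} You assert that the generic conic of the projection from $L$ lives over $K(u)=k\Semr{t}(u)$, ``a $C_2$ field by Tsen--Lang (transcendence degree $2$ over the algebraically closed $k$), so every conic over it has a rational point.'' Both halves are off. First, $k\Semr{t}$ does not have transcendence degree $1$ over $k$; it is $C_1$ by Lang's theorem for complete discretely valued fields with algebraically closed residue field, and then $K(u)$ is $C_2$ by Tsen--Lang, but not for the reason stated. More seriously, $C_2$ does \emph{not} give points on conics: it only guarantees zeros of degree-$d$ forms in more than $d^{2}$ variables, so for $d=2$ one needs more than four variables, not three. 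Concretely, $x^{2}+uy^{2}+tz^{2}$ is anisotropic over $k\Semr{t}(u)$. Thus a single $K$-line on a smooth cubic surface does not, via this argument, yield $K$-rationality; one would need, for example, a second $K$-line skew to the first to act as a section of the conic bundle.

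\textbf{The acknowledged $E_6$/$D_5$/$A_5$ gap is not closed.} You correctly note that these configurations (each a single rational double point, so well within the hypothesis $m\le 3$) admit no line in the smooth locus of $S_0$, so the Hensel step cannot start. Your suggested workaround via a semistable model is only a sketch, and even if it succeeded it would at best show that $S_\eta\times_K\PP^1$ is $K$-rational, i.e.\ that $S_\eta$ is stably rational; the first assertion of the corollary demands honest $K$-rationality.

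\textbf{What the paper does instead.} The paper bypasses both problems by analysing the Galois action on the $27$ lines of the generic fibre directly, via Manin's table of conjugacy classes in $W(E_6)$. The first $11$ rows of \cite[Table~1]{CubicForms} are exactly those for which the $K$-minimal model is a del Pezzo of degree $\le 4$, and each of these rows has at most three size-$1$ orbits among the $27$ lines. A multiplicity count on $S_0$---using that smooth-locus lines have multiplicity $1$ and that the scheme of lines through any one double point has length $6$---shows that with $m\le 3$ double points there must be at least $27-6m\ge 9$ size-$1$ orbits, excluding rows $1$--$11$. Hence the minimal model over $K$ is a del Pezzo of degree $\ge 5$, which is $K$-rational. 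This rationality then feeds into Corollary~\ref{cor-rational} exactly as you anticipated for the second paragraph of the statement.
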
    

\begin{rmk} \label{rmk-cubic}
\marpar{rmk-cubic}
In fact Hassett and Tschinkel prove this result for families of cubic
surfaces whose singular fibers have rational double points, but
possibly more than $3$ such double points.  There is only one cubic
surface with $4$ rational double points, the zeroset of
$X_0X_1X_2+X_0X_1X_3 + X_0X_2X_3+X_1X_2X_3$ which has $4$ ordinary
double points at the vertices of the coordinate tetrahedron.  The
method in ~\cite{HT07} uses \emph{strong rational connectedness},
i.e., proving the smooth locus of each singular fiber is rationally
connected, combined with Corollary ~\ref{cor-HT}.  For the unique
cubic surface with $4$ ordinary double points, one can explicitly
construct rational curves in the smooth locus connecting any pair of
points.     
\end{rmk}


\section{Extension of two results of Grothendieck}
\label{sec-G}
\marpar{sec-G}

\mni
One of Grothendieck's results which is frequently useful in studying
coherent sheaves on proper schemes is \cite[Corollaire 7.7.8]{EGA3}.
The original proof includes a hypothesis which Grothendieck refers to
as ``\emph{surabondante}'' (Grothendieck's emphasis).  This hypothesis
has subsequently been removed, cf. ~\cite[Proposition
2.1.3]{Lieblich}.  We would like to briefly explain another way to
remove these hypotheses based on the representability of the Quot
scheme.  Using a general such representability 
result of Olsson, this leads to the following formulation. 

\begin{thm}\cite[Corollaire 7.7.8]{EGA3}, \cite[Proposition
  2.1.3]{Lieblich} \label{thm-Hom}
\marpar{thm-Hom}
Let $Y$ be an algebraic space.
Let $f:\mc{X}\rightarrow Y$ be a separated, locally finitely
presented, algebraic 
stack over the category $(\text{Aff}/Y)$ of affine $Y$-schemes.  Let
$\mc{F}$ and $\mc{G}$ be locally finitely presented, quasi-coherent 
$\OO_{\mc{X}}$-modules.  Assume that $\mc{G}$ is $Y$-flat and has
proper support over $Y$.  Then the covariant functor of quasi-coherent
$\OO_Y$-modules,
$$
\mc{T}(\mc{M}) :=
f_*(\textit{Hom}_{\OO_{\mc{X}}}(\mc{F},\mc{G}\otimes_{\OO_Y} \mc{M})),
$$
is representable by a locally finitely presented, quasi-coherent
$\OO_Y$-module $\mc{N}$, i.e., there is a natural equivalence of
functors
$$
\mc{T}(\mc{M}) \xrightarrow{\cong}
\textit{Hom}_{\OO_Y}(\mc{N},\mc{M}).
$$
\end{thm}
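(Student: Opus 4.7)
The plan is to reduce to $Y$ affine, use Olsson's $\text{Quot}$-representability theorem to present the $\Hom$-functor as an algebraic space $H\to Y$, and then upgrade $H$ to a linear scheme $V(\mathcal{N}) := \Spec_{Y}(\Sym_{\mathcal{O}_Y}\mathcal{N})$ for a locally finitely presented, quasi-coherent $\mathcal{N}$. Because the desired conclusion is \'etale-local on $Y$ and quasi-coherent modules descend, I reduce immediately to the case $Y = \Spec A$ and work with globally defined $\mathcal{F}$ and $\mathcal{G}$.

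In that setting Olsson's theorem applies directly: the contravariant functor
$$
T \longmapsto \Hom_{\mathcal{O}_{\mathcal{X}_{T}}}(\mathcal{F}_{T}, \mathcal{G}_{T})
$$
on affine $Y$-schemes $T$ is representable by a separated, locally finitely presented algebraic space $H \to Y$. The classical trick of replacing a homomorphism $\varphi$ by the quotient of $\mathcal{F}\oplus\mathcal{G}$ by its graph realizes $\Hom$ as an open subfunctor of $\text{Quot}(\mathcal{F}\oplus\mathcal{G})$, so the hypotheses that $\mathcal{X}$ is separated and locally of finite presentation, $\mathcal{G}$ is $Y$-flat with proper support, and $\mathcal{F}$ is locally of finite presentation, are exactly the inputs Olsson's theorem requires.

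Next I invoke the $\mathcal{O}_Y$-module structure on $\Hom$ to equip $H$ with compatible $Y$-morphisms
$$
+ : H \times_Y H \to H, \qquad \cdot : \mA^{1}_{Y} \times_Y H \to H, \qquad 0 : Y \to H,
$$
satisfying the module axioms. The resulting $\Gm$-action has $0(Y)$ as its unique fixed locus and contracts $H$ onto it, forcing $H\to Y$ to be affine. Decomposing the quasi-coherent $\mathcal{O}_Y$-algebra $\pi_{*}\mathcal{O}_{H}$ (for $\pi : H\to Y$) into $\Gm$-weight pieces and letting $\mathcal{N}$ be the degree-one summand, one obtains $\pi_{*}\mathcal{O}_{H} \cong \Sym_{\mathcal{O}_Y}\mathcal{N}$, so that $H = V(\mathcal{N})$. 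Local finite presentation of $\mathcal{N}$ follows from that of $H/Y$.

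Finally, the identity of $H$ corresponds under $H(H) = \Hom_{\mathcal{O}_{H}}(\mathcal{N}_{H}, \mathcal{O}_{H})$ to the canonical map $\mathcal{N}\to\mathcal{N}$, and under the tautological identification with the $\Hom$-functor to a universal homomorphism $\varphi_{\text{univ}} : \mathcal{F}\to \mathcal{G}\otimes_{\mathcal{O}_Y} f^{*}\mathcal{N}$. Pairing with $\varphi_{\text{univ}}$ defines a natural transformation
$$
\mathcal{H}om_{\mathcal{O}_Y}(\mathcal{N}, \mathcal{M}) \longrightarrow \mathcal{T}(\mathcal{M}),
$$
which is an isomorphism for $\mathcal{M}$ locally free of finite rank by the functor-of-points description of $V(\mathcal{N})$, for $\mathcal{M}$ finitely presented by left-exactness of both sides (using $Y$-flatness of $\mathcal{G}$), and for arbitrary quasi-coherent $\mathcal{M}$ by compatibility with filtered colimits (using local finite presentation of $\mathcal{N}$ and $\mathcal{F}$). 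The main obstacle is the linearization step: checking within the algebraic-space category that the $\mathcal{O}_Y$-module-scheme structure on $H$ really forces $H\cong V(\mathcal{N})$, which hinges on the attractor property of the zero section under the $\Gm$-action and the resulting $\Gm$-equivariant grading on $\pi_{*}\mathcal{O}_H$.
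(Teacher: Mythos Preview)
Your overall strategy coincides with the paper's: both use Olsson's Quot theorem together with the graph-of-a-homomorphism trick to realize the $\Hom$ functor as a separated, locally finitely presented algebraic space $H$ over $Y$ (the paper calls it $Z$, and packages the graph argument as its Lemma~\ref{lem-Hom2}), and then argue that $H$ is linear, i.e.\ isomorphic to $\underline{\Spec}_Y(\Sym\,\mathcal{N})$ for some locally finitely presented $\mathcal{N}$.

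The gap is in your linearization step. You assert that the $\Gm$-action from scalar multiplication, having the zero section as attractor, ``forces $H\to Y$ to be affine.'' For schemes of finite type over a field this is a Bialynicki--Birula type statement, but for a merely separated, locally finitely presented algebraic space over an arbitrary base $Y$ it is not obvious and you give no argument or reference; results in this direction (Drinfeld; Alper--Hall--Rydh) exist but are considerably heavier than anything else in the proof. The subsequent step, that the graded algebra $\pi_*\mathcal{O}_H$ is actually $\Sym$ of its degree-one piece, also needs the additive Hopf structure and not just the grading, and you do not carry this out. The paper avoids all of this: it defines $\mathcal{N}$ as the conormal module $z^*\mathcal{I}$ of the zero section $z\colon Y\hookrightarrow Z$, extracts the universal $\chi\colon\mathcal{F}\to\mathcal{G}\otimes\mathcal{N}$ from the restriction of the tautological homomorphism to the first infinitesimal neighborhood $Z_1=V(\mathcal{I}^2)$, and then invokes the argument proving the equivalence of parts (a) and (d) of \cite[Th\'eor\`eme~7.7.5]{EGA3} to conclude simultaneously that $\chi$ yields the natural equivalence and that $\underline{\Spec}_Y(\Sym\,\mathcal{N})\to Z$ is an isomorphism. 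A smaller issue: your bootstrap ``from locally free to finitely presented by left-exactness'' does not work as written, since a presentation $E_1\to E_0\to\mathcal{M}\to 0$ is on the wrong side for a left-exact functor; the standard fix (and the content of the EGA argument the paper cites) is to test on the square-zero extension $A\oplus\mathcal{M}$ and use additivity of both functors.
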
  

\mni
Grothendieck deduced his version of this result as a corollary of
another theorem.  In fact the theorem also follows from the corollary.

\begin{cor}\cite[Th\'eor\`eme 7.7.6]{EGA3} \label{cor-Hom}
\marpar{cor-Hom}
Let $f:\mc{X} \rightarrow Y$ be as in Theorem ~\ref{thm-Hom}.  Let
$\mc{G}$ be a locally finitely presented, quasi-coherent
$\OO_{\mc{X}}$-module which is $Y$-flat and has proper support over
$Y$.  Then there exists a locally finitely presented, quasi-coherent
$\OO_Y$-module $\mc{Q}$ and a natural equivalence of covariant
functors of quasi-coherent $\OO_Y$-modules $\mc{M}$,
$$
f_*(\mc{G}\otimes_{\OO_Y}\mc{M}) \xrightarrow{\cong}
\textit{Hom}_{\OO_Y}(\mc{Q},\mc{M}).
$$
\end{cor}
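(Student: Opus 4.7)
The plan is to derive Corollary \ref{cor-Hom} as a direct specialization of Theorem \ref{thm-Hom}. The key observation is that taking $\mc{F} = \OO_{\mc{X}}$ in the theorem collapses the inner Hom-sheaf to the identity, so that the representing object produced by Theorem \ref{thm-Hom} is precisely the $\mc{Q}$ demanded by the corollary.

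First I would verify that $\OO_{\mc{X}}$ satisfies the input hypotheses of Theorem \ref{thm-Hom}: it is tautologically a locally finitely presented, quasi-coherent $\OO_{\mc{X}}$-module. The sheaf $\mc{G}$ is assumed to be locally finitely presented, $Y$-flat, and of proper support over $Y$, which is exactly what the theorem requires. Next I would invoke the canonical natural identification $\textit{Hom}_{\OO_{\mc{X}}}(\OO_{\mc{X}}, \mc{H}) \cong \mc{H}$ for any $\OO_{\mc{X}}$-module $\mc{H}$, applied to $\mc{H} = \mc{G}\otimes_{\OO_Y}\mc{M}$. This produces, naturally in the quasi-coherent $\OO_Y$-module $\mc{M}$, an isomorphism
$$
\mc{T}(\mc{M}) \;=\; f_*\bigl(\textit{Hom}_{\OO_{\mc{X}}}(\OO_{\mc{X}}, \mc{G}\otimes_{\OO_Y} \mc{M})\bigr) \;\cong\; f_*(\mc{G}\otimes_{\OO_Y}\mc{M}).
$$
Theorem \ref{thm-Hom} now supplies a locally finitely presented, quasi-coherent $\OO_Y$-module $\mc{N}$ together with a natural equivalence $\mc{T}(\mc{M}) \xrightarrow{\cong} \textit{Hom}_{\OO_Y}(\mc{N},\mc{M})$. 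Setting $\mc{Q} := \mc{N}$ and composing with the identification above yields the natural equivalence asserted by the corollary.

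There is essentially no obstacle beyond bookkeeping, since the reduction is purely formal; the only thing to verify is that the two natural isomorphisms above are compatible as $\mc{M}$ varies, which follows from the functoriality of $\textit{Hom}_{\OO_{\mc{X}}}(\OO_{\mc{X}}, -)$, of tensor product, and of pushforward. The authors' remark that ``the theorem also follows from the corollary'' signals a genuine converse — one expects to reduce a general $\mc{F}$ by expressing $\textit{Hom}_{\OO_{\mc{X}}}(\mc{F}, \mc{G}\otimes_{\OO_Y}\mc{M})$ as the pushforward of a tensor product $\mc{G}'\otimes_{\OO_Y}\mc{M}$ for a sheaf $\mc{G}'$ built from $\mc{F}$ and $\mc{G}$ — but that direction is not needed to establish Corollary \ref{cor-Hom}.
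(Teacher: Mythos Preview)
Your proposal is correct and matches the paper's own proof exactly: the paper's argument is the single sentence ``This follows from Theorem~\ref{thm-Hom} by taking $\mc{F}$ to be $\OO_{\mc{X}}$,'' and your write-up simply spells out the trivial bookkeeping behind that sentence.
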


\mni
Grothendieck in turn deduces \cite[Th\'eor\`eme 7.7.6]{EGA3} 
as a special case of a more general 
representability
result regarding the hyperderived pushforwards of a bounded below 
complex of locally
finitely presented, quasi-coherent
$\OO_{\mc{X}}$-modules which are $Y$-flat and have proper support over
$Y$.  Undoubtedly this general representability result extends to the
setting of stacks.  However, to deduce \cite[Corollaire 7.7.8]{EGA3}
from \cite[Th\'eor\`eme 7.7.6]{EGA3}, Grothendieck requires a
hypothesis that $\mc{F}$ has a presentation by locally free
$\OO_{\mc{X}}$-modules.  This is quite a restrictive hypothesis.

\mni
Instead one can try to prove Theorem ~\ref{thm-Hom} directly using
Artin's representability theorems.  This is precisely what Lieblich
does in ~\cite[Proposition 2.1.3]{Lieblich}.  Here we point
out that Theorem ~\ref{thm-Hom} also follows from
from representability of the Quot functor as proved by Olsson,
~\cite[Theorem 1.5]{OProper}.  

\begin{lem} \label{lem-Hom1}
\marpar{lem-Hom1}
Let $f:\mc{X} \rightarrow Y$ be as in Theorem ~\ref{thm-Hom}.  Let
$\mc{F}$ and $\mc{G}$ be locally finitely presented, quasi-coherent
$\OO_{\mc{X}}$-modules.  And let 
$$
\phi:\mc{F} \rightarrow \mc{G}
$$ 
be a
homomorphism of $\OO_{\mc{X}}$-modules.  
Assume $\mc{G}$ has proper support over $Y$,
resp. $\mc{F}$ and $\mc{G}$ have proper support over $Y$ and $\mc{G}$
is $Y$-flat.  Then there exists an open subspace $U$, resp. $V$, of
$S$ with the following property.  For every morphism $g:Y'\rightarrow
Y$ of algebraic spaces, the pullback morphism of sheaves on
$Y'\times_Y \mc{X}$, 
$$
(g,\text{Id}_{\mc{X}})^* \phi:
(g,\text{Id}_{\mc{X}})^* \mc{F} \rightarrow (g,\text{Id}_{\mc{X}})^*
\mc{G}
$$
is surjective, resp. an isomorphism, if and only if $g$ factors
through $U$, resp. $V$.
\end{lem}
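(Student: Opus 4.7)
The plan is to reduce both statements to the following openness-of-vanishing lemma: if $\mc{H}$ is a locally finitely presented quasi-coherent $\OO_{\mc{X}}$-module whose support is proper over $Y$, then $W := Y \setminus f(\text{supp}(\mc{H}))$ is open, and for every morphism $g:Y' \rightarrow Y$ of algebraic spaces the pullback $(g,\text{Id}_{\mc{X}})^*\mc{H}$ vanishes if and only if $g$ factors through $W$. The subset $f(\text{supp}(\mc{H}))$ is closed because $f|_{\text{supp}(\mc{H})}$ is proper, hence a closed map. The ``if'' direction is trivial; for ``only if'' one picks $y' \mapsto y \notin W$ and a stalk $\mc{H}_x \neq 0$ with $x \in \mc{X}_y$, and Nakayama yields $\mc{H}_x \otimes_{\OO_{Y,y}} \kappa(y) \neq 0$, which stays nonzero after the faithfully flat residue extension $\kappa(y) \hookrightarrow \kappa(y')$.

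The surjectivity statement is then immediate: apply the lemma to $\mc{C} := \text{coker}(\phi)$. This sheaf is locally finitely presented (cokernels of maps between locally fp sheaves are locally fp), and $\text{supp}(\mc{C}) \subseteq \text{supp}(\mc{G})$ is proper over $Y$. By right exactness of pullback, $(g,\text{Id})^*\phi$ is surjective if and only if $(g,\text{Id})^*\mc{C} = 0$, so we may take $U := Y \setminus f(\text{supp}(\mc{C}))$.

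For the isomorphism statement, produce $U$ as above; the inclusion $U \hookrightarrow Y$ certifies that $\phi|_{\mc{X}|_U}$ is itself surjective, giving a short exact sequence $0 \rightarrow \mc{K} \rightarrow \mc{F}|_{\mc{X}|_U} \rightarrow \mc{G}|_{\mc{X}|_U} \rightarrow 0$. Because $\mc{G}|_{\mc{X}|_U}$ is $U$-flat, this sequence is preserved by every base change $g:Y' \rightarrow U$, and hence $(g,\text{Id})^*\phi$ is an isomorphism if and only if $(g,\text{Id})^*\mc{K} = 0$. Since $\mc{F}$ has proper support over $Y$, $\mc{K}$ has proper support over $U$. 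Applying the vanishing lemma to $\mc{K}$ on $\mc{X}|_U$ yields the desired open $V \subseteq U$.

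The one technical point that genuinely requires care is verifying that $\mc{K}$ is locally finitely presented, not merely locally finitely generated, so that the Nakayama step in the vanishing lemma applies. This is the standard fact that a flat, locally finitely presented quotient of a locally finitely presented module has a locally finitely presented kernel: on a faithfully flat cover the flat quotient splits off, exhibiting $\mc{K}$ as a direct summand of $\mc{F}|_{\mc{X}|_U}$. In the (essentially Noetherian) settings in which the paper applies the lemma this holds automatically via coherence, so this obstacle is only a matter of bookkeeping rather than a substantive difficulty.
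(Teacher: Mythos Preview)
Your approach is the same as the paper's: take $U = Y \setminus f(\text{Supp}(\text{Coker}\,\phi))$ and then $V = U \setminus f(\text{Supp}(\mc{K}))$, using $Y$-flatness of $\mc{G}$ so that over $U$ the sequence $0 \to \mc{K} \to \mc{F}|_U \to \mc{G}|_U \to 0$ stays exact after any base change $Y' \to U$.

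One correction on the point you yourself flagged. Your splitting argument would need $\mc{G}$ to be flat as an $\OO_{\mc{X}}$-module (so that it is fppf-locally projective and the surjection splits); the hypothesis is only $Y$-flatness, which does not imply this, and with only $Y$-flatness the kernel can fail to be finitely presented over a non-Noetherian base. Fortunately the Nakayama step in your vanishing lemma --- and the closedness of the support --- require only that $\mc{H}$ be locally of \emph{finite type}. And $\mc{K}$ is locally of finite type by an elementary argument with no flatness needed: choose a surjection $\OO^n \twoheadrightarrow \mc{F}|_U$; since $\mc{G}|_U$ is finitely presented, the kernel of the composite $\OO^n \to \mc{G}|_U$ is finitely generated, and $\mc{K}$ is its image in $\mc{F}|_U$. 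The paper handles this step by citing EGA~IV, Lemme~11.3.9.1.
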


\begin{proof}
Since $\text{Coker}(\phi)$ is a locally finitely presented,
quasi-coherent sheaf supported on $\text{Supp}(\mc{G})$,
$\text{Supp}(\text{Coker}(\phi))$ is closed in 
$\text{Supp}(\mc{G})$.  
Since
$\text{Supp}(\mc{G})$ is proper over $Y$, 
$f(\text{Supp}(\text{Coker}(\phi)))$ is a
closed subset of 
$Y$.  Define $U$ to be the open complement.  For a morphism
$g:Y'\rightarrow Y$, $(g,\text{Id}_{\mc{X}})^* \phi$ is surjective if
and only if $\text{Coker}((g,\text{Id}_{\mc{X}})^* \phi)$ is zero,
i.e., if and only if the support of
$\text{Coker}((g,\text{Id}_{\mc{X}})^* \phi)$ is empty.
Formation of the cokernel
is compatible with pullback, i.e.,
$$
\text{Coker}((g,\text{Id}_{\mc{X}})^* \phi) \cong
(g,\text{Id}_{\mc{X}})^*\text{Coker}(\phi). 
$$
Thus the support of $\text{Coker}((g,\text{Id}_{\mc{X}})^* \phi)$ is
empty if and only if $g(Y')$ is disjoint from
$f(\text{Supp}(\text{Coker}(\phi)))$, i.e., if and only if $g(Y')$ is 
contained in $U$.  

\mni
Next assume that $\mc{F}$, $\mc{G}$ each have proper support over $Y$
and $\mc{G}$ is $Y$-flat.  
Since $\mc{G}$ is $Y$-flat, the kernel of $\phi$ is locally
finitely presented by \cite[Lemme 11.3.9.1]{EGA4} (the property of
being locally finitely presented can be checked locally in the fppf
topology on $\mc{X}$, thus reduces to the case of a morphism of
schemes).  
The support of the
kernel is contained in the support of the kernel of $\mc{F}$.  Thus
$\text{Ker}(\phi)$ is a locally finitely presented, quasi-coherent
sheaf on the support of $\mc{F}$, which is proper over $Y$.  So the
support of $\text{Ker}(\phi)$ is also proper over $Y$.  Thus its image
under $f$ is a closed subset of $Y$.  Define $V$ to be the open
complement of this closed subset in $U$.

\mni
For every morphism $g:Y'\rightarrow U$, since $\mc{G}$ is $Y$-flat,
the following sequence is exact
$$
0 \rightarrow (g,\text{Id}_{\mc{X}})^*\text{Ker}(\phi) \rightarrow
(g,\text{Id}_{\mc{X}})^* \mc{F} \rightarrow (g,\text{Id}_{\mc{X}})^*
\mc{G} \rightarrow 0.
$$
Thus $(g,\text{Id}_{\mc{X}})^*\phi$ is an isomorphism if and only if
$(g,\text{Id}_{\mc{X}})^* \text{Ker}(\phi)$ is zero, i.e., if and only
if $g(Y')$ is contained in $V$.
\end{proof}

\begin{lem} \label{lem-Hom2}
\marpar{lem-Hom2}
Let $f:\mc{X} \rightarrow Y$, $\mc{F}$ and $\mc{G}$ be as in Theorem
~\ref{thm-Hom}.  There exists a locally finitely presented, separated
morphism of algebraic spaces $h:Z\rightarrow Y$ and a morphism of
quasi-coherent sheaves on $Z\times_Y \mc{X}$ 
$$
\phi:(h,\text{Id}_{\mc{X}})^*\mc{F} \rightarrow
(h,\text{Id}_{\mc{X}})^* \mc{G}
$$
which represents the contravariant functor associating to every
morphism $g:Y'\rightarrow Y$ the set of morphisms of quasi-coherent
sheaves on $Y'\times_Y \mc{X}$
$$
\psi:(g,\text{Id}_{\mc{X}})^*\mc{F} \rightarrow
(g,\text{Id}_{\mc{X}})^* \mc{G}.
$$
\end{lem}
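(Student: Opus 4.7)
The plan is to realize $Z$ as an open subspace of a Quot space. To any $\OO_{\mc{X}}$-module homomorphism $\phi:\mc{F}\to\mc{G}$ we associate the surjection
$$
q_\phi:\mc{G}\oplus\mc{F}\to\mc{G},\quad (g,f)\mapsto g+\phi(f),
$$
whose kernel is the graph of $-\phi$. Since $\mc{G}$ is $Y$-flat with proper support over $Y$, the quotient $q_\phi$ defines a $Y$-point of $Q:=\underline{\text{Quot}}_{(\mc{G}\oplus\mc{F})/\mc{X}/Y}$, the functor of locally finitely presented, $Y$-flat quotients of $\mc{G}\oplus\mc{F}$ with proper support over $Y$. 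Conversely, a quotient $q:\mc{G}\oplus\mc{F}\to\mc{Q}$ has the form $q_\phi$ for some $\phi$ precisely when the composition $q\circ\iota_{\mc{G}}:\mc{G}\to\mc{Q}$ with the canonical injection is an isomorphism; in that case $\phi$ is recovered as $(q\circ\iota_{\mc{G}})^{-1}\circ q\circ\iota_{\mc{F}}$, and one verifies that $\text{Ker}(q_\phi)=\text{Ker}(q)$ so the two quotients are canonically identified.

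First I would invoke Olsson's \cite[Theorem 1.5]{OProper} to represent $Q$ by a locally finitely presented, separated algebraic space over $Y$, equipped with a universal surjection onto a sheaf $\mc{Q}$ on $Q\times_Y\mc{X}$. The pullback of $\mc{G}$ and the sheaf $\mc{Q}$ are both $Q$-flat with proper support over $Q$, so Lemma \ref{lem-Hom1} (the isomorphism case) applied to $q\circ\iota_{\mc{G}}$ produces an open subspace $V\subset Q$ characterized by the property that $(g,\text{Id}_{\mc{X}})^*(q\circ\iota_{\mc{G}})$ is an isomorphism exactly when $g$ factors through $V$. Set $Z:=V$ with $h:Z\to Y$ the restriction of the structure map, and define the universal homomorphism to be $\phi:=(q\circ\iota_{\mc{G}})^{-1}\circ q\circ\iota_{\mc{F}}$ on $Z\times_Y\mc{X}$. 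Local finite presentation and separatedness of $h$ are inherited from $Q\to Y$ via an open immersion.

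Universality is a direct translation of the bijection sketched above. Given any $g:Y'\to Y$ and homomorphism $\psi:(g,\text{Id}_{\mc{X}})^*\mc{F}\to(g,\text{Id}_{\mc{X}})^*\mc{G}$, the quotient $q_\psi$ classifies a morphism $Y'\to Q$ which factors through $V$ by the defining property of $V$; conversely a morphism $Y'\to V$ yields, by pullback of $q$ and inversion of $q\circ\iota_{\mc{G}}$, a homomorphism $\psi$. These assignments are mutually inverse, and compatibility with further base change is immediate from the functoriality of the constructions.

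The main subtlety I anticipate is that $\mc{F}$, and hence $\mc{G}\oplus\mc{F}$, need not have proper support over $Y$, so one must confirm that Olsson's theorem applies to the Quot functor of a source sheaf that is itself not proper. This is built into the statement of \cite[Theorem 1.5]{OProper}: the requirement is only that the parametrized quotients, not the source, be $Y$-flat with proper support, and every $q_\phi$ satisfies this because $\mc{G}$ does. A secondary check is that the isomorphism case of Lemma \ref{lem-Hom1} applies to $q\circ\iota_{\mc{G}}$: this holds because $\mc{Q}$ is $Q$-flat by definition of Quot, and both source and target have proper support over $Q$.
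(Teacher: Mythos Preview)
Your proposal is correct and follows essentially the same route as the paper: both arguments embed the Hom functor into Olsson's Quot space of $\mc{F}\oplus\mc{G}$ and then cut out the open locus where the composition $\mc{G}\hookrightarrow\mc{F}\oplus\mc{G}\twoheadrightarrow\mc{Q}$ is an isomorphism via Lemma~\ref{lem-Hom1}. The only differences are cosmetic (ordering of summands, and your explicit discussion of why the hypotheses of Olsson's theorem and Lemma~\ref{lem-Hom1} are met).
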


\begin{proof}
By \cite[Theorem 1.5]{OProper}, there exists a locally finitely
presented, separated morphism $i:W\rightarrow Y$ of algebraic spaces
and a quotient
$$
\theta:(i,\text{Id}_{\mc{X}})^*(\mc{F}\oplus \mc{G}) \rightarrow \mc{H}
$$
representing the Quot functor of flat families of locally finitely
presented, quasi-coherent quotients of the pullback of $\mc{F}\oplus
\mc{G}$ having proper support over the base.  Denote by
$$
\theta_{\mc{G}}:(i,\text{Id}_{\mc{X}})^* \mc{G} \rightarrow \mc{H}
$$
the composition of the summand
$$
e_{\mc{G}}:(i,\text{Id}_{\mc{X}})^* \mc{G} \hookrightarrow
(i,\text{Id}_{\mc{X}})^*(\mc{F}\oplus \mc{G})
$$ 
with $\theta$.  

\mni
By Lemma ~\ref{lem-Hom1}, there is an open subspace $Z$ of $W$ such
that for every morphism $j:Y'\rightarrow W$, the pullback
$$
(j,\text{Id}_{\mc{X}})^* \theta_{\mc{G}}: (i\circ
j,\text{Id}_{\mc{X}})^* \mc{G} \rightarrow (j,\text{Id}_{\mc{X}})^*
\mc{H}
$$
is an isomorphism if and only if $j(Y')$ factors through $Z$.  
Denote by $h:Z\rightarrow Y$ the restriction of $i$ to $Z$.  
If $j(Y')$ factors through $Z$, then
$(j,\text{Id}_{\mc{X}})^* \theta$ equals the composition
$$
(i\circ j,\text{Id}_{\mc{X}})^*(\mc{F}\oplus \mc{G})
\xrightarrow{(\psi,\text{Id})} (i\circ j,\text{Id}_{\mc{X}})^*\mc{G}
\xrightarrow{j^*\theta_{\mc{G}}} (j,\text{Id}_{\mc{X}})^*\mc{H}
$$
for a unique morphism of quasi-coherent sheaves
$$
\psi:(i\circ j,\text{Id}_{\mc{X}})^* \mc{F} \rightarrow (i\circ
j,\text{Id}_{\mc{X}})^* \mc{G}.
$$
In particular, applied to $\text{Id}_Z:Z\rightarrow Z$, this produces
the homomorphism $\phi$.  And it is straightforward to see that the
natural transformation associating to every morphism $j:Y'\rightarrow
Z$ the homomorphism $\psi$ is an equivalence of functors, i.e.,
$(h:Z\rightarrow Y,\phi)$ is universal.
\end{proof}

\begin{proof}[Proof of Theorem ~\ref{thm-Hom}]
Let $h:Z\rightarrow Y$ and $\phi$ be as in Lemma ~\ref{lem-Hom2}.
The zero homomorphism $0:\mc{F} \rightarrow \mc{G}$
defines a $Y$-morphism $z:Y \rightarrow Z$.  Since $Z$ is separated
over $Y$, the $Y$-morphism $z$ is a closed immersion.  Since $Z$ is
locally finitely presented over $Y$, the ideal sheaf $\mc{I}$ of this
closed immersion is a locally finitely presented, quasi-coherent 
$\OO_Z$-module.
Thus $z^*\mc{I}$ is a locally finitely presented, quasi-coherent 
$\OO_Y$-module.  Denote this $\OO_Y$-module by $\mc{N}$.  

\mni
Consider the closed subspace $Z_1$ of $Z$ with ideal sheaf
$\mc{I}^2$.  The restriction of $h$ to $Z_1$ is a finite morphism,
thus equivalent to the locally finitely presented $\OO_Y$-algebra $h_*
\OO_{Z_1}$.  Of course this fits into a short exact sequence
$$
0 \rightarrow z^* \mc{I} \rightarrow h_*\OO_{Z_1} \rightarrow \OO_Y
\rightarrow 0
$$
where the injection is an ideal sheaf and the surjection is a
homomorphism of $\OO_Y$-algebras.  And the morphism $z$ defines a
splitting of this surjection of $\OO_Y$-algebras.  Thus, as an
$\OO_Y$-algebra, there is a canonical isomorphism
$$
h_*\OO_{Z_1} \cong \OO_Y \oplus z^* \mc{I}.
$$
The restriction of $\phi$ to $Z_1$ together with
adjunction of $h^*$ and $h_*$ defines a homomorphism of $\OO_Y$-modules
$$
\mc{F}\rightarrow \mc{G}\otimes_{\OO_Y} h_*\OO_{Z_1}.
$$
Using the canonical isomorphism, this homomorphism is of the form
$$
\mc{F} \xrightarrow{(0,\chi)} \mc{G} \oplus (\mc{G}\otimes_{\OO_Y} \mc{N}).
$$
The homomorphism $\chi$ defines a natural transformation of covariant
functors of quasi-coherent $\OO_Y$-modules $\mc{M}$
$$
\textit{Hom}_{\OO_Y}(\mc{N},\mc{M}) \rightarrow
f_*(\textit{Hom}_{\OO_{\mc{X}}}(\mc{F}, \mc{G}\otimes_{\OO_Y}\mc{M}).
$$
By the same argument used to prove the equivalence of parts (a) and
(d) of \cite[Th\'eor\`eme 7.7.5]{EGA3}, this is an equivalence of
functors and the induced $Y$-morphism
$$
\underline{\text{Spec}}_Y(\text{Sym}^\bullet(\mc{N})) \rightarrow Z
$$
is an isomorphism.  
\end{proof}

\begin{proof}[Proof of Corollary ~\ref{cor-Hom}]
This follows from Theorem ~\ref{thm-Hom} by taking $\mc{F}$ to be
$\OO_{\mc{X}}$.  
\end{proof}


\section{Pseudo-ideal sheaves} \label{sec-pis}
\marpar{sec-pis}

\mni
Let $f:X \rightarrow Y$ be a flat, locally finitely presented,
proper morphism of algebraic spaces.  For every morphism of algebraic
spaces, $g:Y'\rightarrow Y$, denote by $f_{Y'}:X_{Y'}\rightarrow Y'$ the
basechange of $f$.  Using results of Martin Olsson,
the following definitions and results are still valid whenever $X$ is
a flat, locally finitely presented, proper algebraic stack over $Y$.
But since our application is to algebraic spaces, we leave the case of
stacks to the interested reader.

\begin{defn} \label{defn-pis}
\marpar{defn-pis}
For every morphism $g:Y'\rightarrow Y$ of algebraic spaces, a flat
family of \emph{pseudo-ideal sheaves of $X/Y$ over $Y'$} is a pair
$(\mc{F},u)$ consisting of
\begin{enumerate}
\item[(i)]
a $Y'$-flat, locally finitely presented, quasi-coherent
$\OO_{X_{Y'}}$-module $\mc{F}$, and
\item[(ii)]
an $\OO_{X_{Y'}}$-homomorphism $u:\mc{F} \rightarrow \OO_{X_{Y'}}$
\end{enumerate}
such that the following induced morphism is zero,
$$
u':\bigwedge^2 \mc{F} \rightarrow \mc{F}, \ \ f_1\wedge f_2 \mapsto
u(f_1)f_2 - u(f_2)f_1.
$$

\mni
For every pair $g_1:Y'_1 \rightarrow Y$, $g_2:Y'_2 \rightarrow Y$ of
morphisms of algebraic spaces, for every pair $(\mc{F}_1,u_1)$, resp.
$(\mc{F}_2,u_2)$, of flat families of pseudo-ideal sheaves of $X/Y$
over $Y'_1$, resp. over $Y'_2$, and 
for every $Y$-morphism $h:Y'_1 \rightarrow Y'_2$, a \emph{pullback map} 
from $(\mc{F}_1,u_1)$ to $(\mc{F}_2,u_2)$ over $h$ is an
isomorphism of $\OO_{X_{Y'_1}}$-modules
$$
\eta:\mc{F}_1 \rightarrow h^* \mc{F}_2
$$
such that $h^* u_2 \circ \eta$ equals $u_1$.  

\mni
The \emph{category of pseudo-ideal sheaves of $X/Y$},
$\pis{X/S}$, 
is the category
whose objects are data $(g:Y'\rightarrow Y,(\mc{F},u))$ of an affine $Y$-scheme
$Y'$ together with a flat family of pseudo-ideal sheaves of $X/Y$ over
$Y'$, and whose Hom sets
$$
\text{Hom}((g_1:Y'_1\rightarrow Y,(\mc{F}_1,u_1)),(g_2:Y'_2\rightarrow
Y,(\mc{F}_2,u_2)))
$$
are the sets of pairs $(h,\eta)$ of a $Y$-morphism $h:Y'_1\rightarrow
Y'_2$ together with a pullback map $\eta$ from $(\mc{F}_1,u_1)$ to
$(\mc{F}_2,u_2)$ over $h$.  Identity morphisms and composition of
morphisms are defined in the obvious manner.  There is an obvious
functor from $\pis{X/Y}$ to the category $(\text{Aff}/Y)$ of
affine $Y$-schemes sending every object $(g:Y'\rightarrow Y,(\mc{F},u))$
to $(g:Y'\rightarrow Y)$ and sending every morphism $(h,\eta)$ to $h$.  
\end{defn}

\begin{prop} \label{prop-algebraic}
\marpar{prop-algebraic}
The category $\pis{X/Y}$ is a limit-preserving 
algebraic stack over the category $(\text{Aff}/Y)$ of affine $Y$-schemes.
Moreover, the diagonal is quasi-compact and separated.  
\end{prop}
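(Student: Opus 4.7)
The plan is to realize $\pis{X/Y}$ as a closed-subscheme-of-affine-bundle over the algebraic stack of flat families of finitely presented quasi-coherent sheaves on $X/Y$, using Theorem~\ref{thm-Hom} and Olsson's representability of Quot \cite[Theorem 1.5]{OProper} as the two inputs.

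\textbf{Step 1 (The underlying stack of sheaves).} Let $\mc{C}$ be the category fibered in groupoids over $(\text{Aff}/Y)$ whose objects over $g:Y'\to Y$ are $Y'$-flat, locally finitely presented, quasi-coherent $\OO_{X_{Y'}}$-modules and whose morphisms are pullback isomorphisms. Since $f:X\to Y$ is proper, every such sheaf has proper support over $Y'$, and Zariski-locally on $Y'$ arises as a quotient of some $\OO_{X_{Y'}}(-n)^{\oplus N}$; together with \cite[Theorem 1.5]{OProper} this exhibits $\mc{C}$ as a limit-preserving algebraic stack with quasi-compact and separated diagonal (the diagonal of $\mc{C}$ is the $\textit{Isom}$ functor, an open subscheme of a product of two affine $\textit{Hom}$ schemes — affine by Theorem~\ref{thm-Hom} — cut out by the mutual-inverse equations).

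\textbf{Step 2 (Passage from $\mc{C}$ to $\pis{X/Y}$).} Define the forgetful morphism
$$
\Phi:\pis{X/Y} \longrightarrow \mc{C},\qquad (\mc{F},u) \mapsto \mc{F}.
$$
Given $(g:Y'\to Y,\mc{F})\in\mc{C}$, apply Theorem~\ref{thm-Hom} to the pair $(\mc{F},\OO_{X_{Y'}})$ over the base $Y'$ to obtain a locally finitely presented, quasi-coherent $\OO_{Y'}$-module $\mc{N}$ representing the functor $\mc{M}\mapsto (f_{Y'})_*\textit{Hom}_{\OO_{X_{Y'}}}(\mc{F},\OO_{X_{Y'}}\otimes\mc{M})$; thus the set of $\OO_{X_{Y''}}$-homomorphisms $u:\mc{F}|_{Y''}\to\OO_{X_{Y''}}$ over variable $Y''\to Y'$ is representable by the affine $Y'$-scheme $Z:=\underline{\Spec}_{Y'}(\Sym^\bullet \mc{N})$. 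The assignment $u\mapsto u'$, where $u'(f_1\wedge f_2)=u(f_1)f_2-u(f_2)f_1$, is $\OO_{Y'}$-linear in $u$, and its target $\Hom_{\OO_{X_{Y'}}}(\bigwedge^2\mc{F},\mc{F})$ is again representable by an affine $Y'$-scheme via Theorem~\ref{thm-Hom}. Pulling back the zero section cuts out a closed subscheme $Z_0\hookrightarrow Z$ representing the fiber of $\Phi$ over $(Y',\mc{F})$. Hence $\Phi$ is representable by closed subschemes of affine $Y'$-schemes — in particular affine, quasi-compact, separated, and limit-preserving.

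\textbf{Step 3 (Conclusion and diagonal).} Combining Steps 1 and 2, $\pis{X/Y}$ is a limit-preserving algebraic stack over $(\text{Aff}/Y)$. Its diagonal factors as the base change of $\Phi$ along the diagonal of $\mc{C}$ (quasi-compact and separated by Step 1), followed by the diagonal of $\Phi$, which is the closed immersion imposing the additional relation $h^*u_2\circ\eta=u_1$ on top of an isomorphism $\eta:\mc{F}_1\to h^*\mc{F}_2$. Both factors are quasi-compact and separated, hence so is the diagonal of $\pis{X/Y}$.

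\textbf{Main obstacle.} The serious input is algebraicity of $\mc{C}$, which rests on \cite[Theorem 1.5]{OProper}. Once that is in hand, the novel content — that the wedge-product relation cuts out a closed condition on the otherwise affine $\Hom$-bundle — is immediate from the $\OO_{Y'}$-linearity of $u\mapsto u'$ together with a second application of Theorem~\ref{thm-Hom}.
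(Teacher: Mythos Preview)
Your approach is essentially identical to the paper's: forget $u$ to land in $\text{Coh}_{X/Y}$, then show the forgetful morphism is representable by affine schemes (a $\Hom$-bundle cut down by the closed condition $u'=0$), invoking Theorem~\ref{thm-Hom} twice just as the paper does via Lemma~\ref{lem-Hom2}. One caveat worth flagging: your sketch in Step~1 (algebraicity of $\mc{C}$ by locally resolving $\mc{F}$ as a quotient of $\OO_{X_{Y'}}(-n)^{\oplus N}$ and then invoking Quot) tacitly assumes $f$ is projective, whereas the proposition is stated for $f$ merely proper, and proper algebraic spaces need not carry enough locally free sheaves to make such resolutions exist. The paper sidesteps this by citing \cite[Th\'eor\`eme 4.6.2.1]{LM-B} and \cite[Proposition 4.1]{Smoduli} for the algebraicity of $\text{Coh}_{X/Y}$ outright; you should do the same.
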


\begin{proof}
Denote by $\text{Coh}_{X/Y}$ the category of coherent sheaves on
$X/Y$, cf. \cite[(2.4.4)]{LM-B}.  
There is a functor $G:\pis{X/Y} \rightarrow \text{Coh}_{X/Y}$ sending every
object
$(g:Y'\rightarrow Y,(\mc{F},u))$ to $(g:Y'\rightarrow Y,\mc{F})$ and
sending every morphism $(h,\eta)$ to $(h,\eta)$.  This is a $1$-morphism of
categories over $(\text{Aff}/Y)$.  
By \cite[Th\'eor\`eme
4.6.2.1]{LM-B} and \cite[Proposition 4.1]{Smoduli}, $\text{Coh}_{X/Y}$
is a limit-preserving algebraic stack over $(\text{Aff}/Y)$ with
quasi-compact, separated diagonal.  Thus to prove the proposition, it
suffices to prove that $G$ is representable by locally finitely
presented, separated algebraic spaces.

\mni
Let $Y'$ be a $Y$-algebraic space and let $\mc{F}$ be a locally
finitely presented, quasi-coherent
$\OO_{X_{Y'}}$-module.  Since $f_{Y'}:X_{Y'}\rightarrow Y'$ is flat,
locally finitely presented and proper, by Lemma ~\ref{lem-Hom2},
there exists a locally finitely presented, separated morphism
$h:Z\rightarrow Y'$ of algebraic spaces and a universal homomorphism
$$
u:(h,\text{Id}_{X_{Y'}})^* \mc{F} \rightarrow \OO_{X_Z}.
$$
In fact, as follows from the proof of Theorem ~\ref{thm-Hom}, there is
a 
locally finitely presented, quasi-coherent
$\OO_{Y'}$-module $\mc{N}$ and a homomorphism of
$\OO_{X_{Y'}}$-modules
$$
\chi:\mc{F} \rightarrow \OO_{X_{Y'}}\otimes_{\OO_{Y'}} \mc{N}
$$
such that $Z =
\underline{\text{Spec}}_{Y'}(\text{Sym}^\bullet(\mc{N}))$ and such
that $u$ is the homomorphism induced by $\chi$.  

\mni
Since $\mc{F}$ is a locally finitely presented, quasi-coherent
$\OO_{X_{Y'}}$-module which is $Y'$-flat and has proper support over
$Y'$, the same is true of $(h,\text{Id}_{X_{Y'}})^* \mc{F}$ relative
to $Z$.  Thus, by Theorem ~\ref{thm-Hom}, there exists a locally
finitely presented, quasi-coherent $\OO_Z$-module $\mc{N}$ and a
universal homomorphism
$$
\chi:\bigwedge^2 (h,\text{Id}_{X_{Y'}})^* \mc{F} \rightarrow
(h,\text{Id}_{X_{Y'}})^* \mc{F}\otimes_{\OO_Z} \mc{N}. 
$$
Thus there exists a unique homomorphism of $\OO_Z$-modules,
$$
\zeta:\mc{N} \rightarrow \OO_Z
$$
such that the induced homomorphism of $\OO_{X_Z}$-modules
$$
\bigwedge^2 (h,\text{Id}_{X_{Y'}})^* \mc{F} \xrightarrow{\chi}
(h,\text{Id}_{X_{Y'}})^* \mc{F}\otimes_{\OO_Z} \mc{N}
\xrightarrow{\text{Id}\otimes \zeta} (h,\text{Id}_{X_{Y'}})^* \mc{F}
$$
equals $u'$.  Let $P$ denote the closed subspace of $Z$ whose ideal
sheaf equals $\zeta(\mc{N})$.  Chasing diagrams, the restriction
morphism $h|_P:P\rightarrow Y'$ together with the pullback of $u$ to
$P$ is a pair representing $Y'\times_{\text{Coh}_{X/Y}} \pis{X/Y}
\rightarrow Y'$.  Since $Z\rightarrow Y'$ is a locally finitely
presented, separated morphism of algebraic spaces (also schematic),
and since $P$ is a closed subspace of $Y'$ whose
ideal sheaf is locally finitely generated (being the image of the
locally finitely presented sheaf $\mc{N}$), also $P \rightarrow Y$ is
a locally finitely presented, separated morphism of algebraic spaces
(also schematic).
\end{proof}

\mni
Denote by $(g:\text{Hilb}_{X/Y}\rightarrow Y,C)$ a universal pair of a
morphism of algebraic spaces $g$ and a closed subspace $C$ of
$\text{Hilb}_{X/Y}\times_Y X$ which is flat, locally finitely
presented, and proper over $\text{Hilb}_{X/Y}$, i.e., an object
representing the Hilbert functor, cf. \cite[Corollary 6.2]{Artin}.
Denote by 
$$
0 \rightarrow \mc{I} \xrightarrow{u} \OO_{\text{Hilb}_{X/Y}\times_Y X}
\rightarrow \OO_C \rightarrow 0.
$$
the natural exact sequence, where $\mc{I}$ is the ideal sheaf of $C$
in $\text{Hilb}_{X/Y}\times_Y X$.  

\begin{prop} \label{prop-Hilbopen}
\marpar{prop-Hilbopen}
The pair $(\mc{I},u)$ is a family of 
pseudo-ideal sheaves of $X/Y$ over $\text{Hilb}_{X/Y}$.  The induced
$1$-morphism
$$
\iota:\text{Hilb}_{X/Y} \rightarrow \pis{X/Y}
$$
is representable by open immersions.  
\end{prop}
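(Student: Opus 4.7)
The plan has two parts: verify that $(\mc{I}, u)$ is a pseudo-ideal family on $\text{Hilb}_{X/Y}$, then show that $\iota$ is representable by open immersions by identifying, for each $Y$-morphism $g \colon Y' \to \pis{X/Y}$ corresponding to a pseudo-ideal $(\mc{F}, u)$ over $Y'$, the fiber product $Y' \times_{\pis{X/Y}} \text{Hilb}_{X/Y}$ with an explicit open subspace of $Y'$.

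For the first part, $\mc{I}$ is locally finitely presented and $\text{Hilb}_{X/Y}$-flat by \cite[Lemme 11.3.9.1]{EGA4} and the Tor long exact sequence applied to the short exact sequence $0 \to \mc{I} \to \OO \to \OO_C \to 0$, using that $\OO_C$ is $\text{Hilb}_{X/Y}$-flat by the universal property of the Hilbert space. The antisymmetry condition $u'(f_1 \wedge f_2) = u(f_1) f_2 - u(f_2) f_1 = f_1 f_2 - f_2 f_1 = 0$ is automatic because $u$ is the inclusion of a submodule of the commutative sheaf $\OO$.

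For the second part, the central observation is that $(\mc{F}, u)$ descends to a point of $\text{Hilb}_{X/Y}$ precisely when (i) $u$ is injective and (ii) the cokernel $\mc{Q} = \OO_{X_{Y'}}/u(\mc{F})$ is $Y'$-flat; both conditions are needed, since injectivity of $u$ alone can fail to produce a flat subscheme (e.g.\ $u = 2x \colon \OO(-1) \to \OO$ over $\Spec \ZZ$) and flatness of $\mc{Q}$ alone can fail to imply $u$ is injective (e.g.\ $\mc{F} = \mc{I}_p \oplus \OO_p$ with $u$ forgetting the second summand). I would verify each condition is open in $Y'$: the injective locus $V_1$ is open by the argument of Lemma \ref{lem-Hom1} applied to $\ker(u)$, which is locally finitely presented by \cite[Lemme 11.3.9.1]{EGA4} and has support contained in $X_{Y'}$ (which is proper over $Y'$ because $X \to Y$ is proper), so $f_{Y'}(\text{Supp}(\ker(u)))$ is closed; the flat locus $V_2$ of $\mc{Q}$ is open by the standard theorem on flat loci for coherent sheaves with proper support over the base.

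On $V = V_1 \cap V_2$, the injective $u$ exhibits $\mc{F}|_V$ as the ideal sheaf of a closed subspace $C \subseteq X_V$ whose structure sheaf $\OO_C = \mc{Q}|_V$ is $V$-flat and automatically $V$-proper, defining a canonical $V$-point of $\text{Hilb}_{X/Y}$; conversely, any lift of $g|_{Y''}$ through $\iota$ corresponds to a flat closed subspace, forcing both conditions and hence $Y'' \to V$. The lift is unique because any automorphism $\eta$ of $(\mc{F}, u)$ satisfying $u \circ \eta = u$ must equal the identity when $u$ is injective, so the fiber product is equivalent to the algebraic space $V$ rather than a nontrivial groupoid. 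The main obstacle in the plan is recognizing that both the injective locus of $u$ and the flat locus of $\mc{Q}$ are needed---neither alone captures precisely the pseudo-ideals that pull back from $\text{Hilb}_{X/Y}$.
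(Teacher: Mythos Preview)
The verification that $(\mc{I}, u)$ is a pseudo-ideal family is correct and matches the paper.

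For the open-immersion claim there is a genuine gap in the ``conversely'' step. Neither the ``injective locus'' $V_1$ nor the ``flat locus'' $V_2$ has the universal property you implicitly use. The functor sending $g:Y''\to Y'$ to $\{*\}$ when $g^*u$ is injective is not representable by a subscheme of $Y'$ at all: take $Y' = \Spec k[t]$, $X_{Y'} = \PP^1_{k[t]}$, $\mc F = \OO_{X_{Y'}}$, and $u$ multiplication by $t$; then $u$ itself is injective (so any representing subscheme would have to be all of $Y'$), yet $u|_{t=0} = 0$ is not. What your argument with $\ker(u)$ actually produces is the open set $Y'\setminus f_{Y'}(\text{Supp}(\ker u))$, which in this example is all of $Y'$ since $\ker(u)=0$; it is not the locus where pullbacks of $u$ are injective, because formation of $\ker(u)$ fails to commute with base change when $\mc Q = \text{coker}(u)$ is not flat. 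Likewise, the open complement of the image of the non-flat locus of $\mc Q$ is not the universal flatifier---pullback of $\mc Q$ to any closed point is automatically flat regardless of where the point sits. So the sentence ``forcing both conditions and hence $Y''\to V$'' is unjustified as written.

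Your open set $V = V_1 \cap V_2$ is nonetheless the correct answer, but showing that every $g:Y''\to Y'$ with $g^*u$ injective and $\text{coker}(g^*u)$ flat over $Y''$ factors through $V$ requires the local flatness criterion \cite[Proposition 11.3.7]{EGA4}: for $\mc F$ flat over the base, injectivity of $u\otimes\kappa$ on a fiber is \emph{equivalent} to injectivity of $u$ together with flatness of $\text{coker}(u)$ near that fiber. Applied at each $y' = g(y'')$, this shows that fiberwise injectivity of $u$ forces both $\mc Q$ flat and $\ker(u)=0$ along $X_{y'}$, hence $y'\in V_1\cap V_2$. This is precisely the key ingredient in the paper's proof, which takes a different route: it passes to the flattening stratification $\Sigma \to Y'$ of $\mc Q$ (a finitely presented monomorphism, not a priori open), cuts out via Lemma~\ref{lem-Hom1} the open $W\subset\Sigma$ over which the pulled-back pseudo-ideal becomes an honest ideal sheaf, and then invokes the same local flatness criterion to prove $W\to Y'$ is formally \'etale, hence an \'etale monomorphism, hence an open immersion. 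Once the missing step is supplied, your direct construction of $V$ inside $Y'$ is somewhat more economical than the paper's detour through $\Sigma$.
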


\begin{proof}
Since the kernel of a surjection of flat modules is flat, $\mc{I}$ is
flat over $\text{Hilb}_{X/Y}$.  By \cite[Lemme 11.3.9.1]{EGA4},
$\mc{I}$ is a locally finitely presented, quasi-coherent sheaf.  Since
the homomorphism $u$ is injective, to prove that $u'$ is zero it
suffices to prove the composition $u\circ u'$ is zero.  This follows
immediately from the definition of $u'$.  
Thus
the pair $(\mc{I},u)$ is a family of pseudo-ideal sheaves of $X/Y$
over $\text{Hilb}_{X/Y}$.

\mni  
To prove that $\iota$ is representable by open immersions, it suffices
to prove that it is representable by quasi-compact, \'etale
monomorphisms of schemes.  To this end, let $Y'$ be an affine
$Y$-scheme and let $(\mc{F},v)$ be a flat family of pseudo-ideal
sheaves of $X/Y$ over $Y'$.  Denote the cokernel of $v$ by 
$$
w:\OO_{X_{Y'}} \rightarrow \mc{G}.
$$ 
By \cite[Theorem 3.2]{OS}, there is a morphism of algebraic spaces
$\sigma:\Sigma\rightarrow Y'$ such that
$(\sigma,\text{Id}_{X_{Y'}})^*\mc{G}$ 
is flat over $\Sigma$ and such that $\Sigma$ is universal among $Y'$-spaces
with this property.  Moreover, $\Sigma$ is a surjective,
finitely-presented, quasi-affine monomorphism (in particular
schematic).  As an aside, please note that the remark preceding
\cite[Theorem 3.2]{OS} is incomplete -- \cite[Proposition 3.1]{OS}
should be properly attributed to Laumon and Moret-Bailly,
\cite[Th\'eor\`eme A.2]{LM-B}.  

\mni
By \cite[Lemma 11.3.9.1]{EGA4},
$\text{Ker}((\sigma,\text{Id}_{X_{Y'}})^*w)$ is a locally finitely
presented, quasi-coherent sheaf.  Moreover, because it is the kernel
of a surjection of sheaves which are flat over $\Sigma$, it is also
flat over $\Sigma$.  
By Lemma ~\ref{lem-Hom1}, there is an open subscheme $W$ of $\Sigma$
such that a morphism $S\rightarrow \Sigma$ factors through $W$ if and
only if the pullback of
$$
(\sigma,\text{Id}_{X_{Y'}})^* \mc{F} \rightarrow 
\text{Ker}((\sigma,\text{Id}_{X_{Y'}})^*w)
$$
is an isomorphism.  Chasing universal properties, it is clear that $W
\rightarrow Y'$ represents
$$
Y' \times_{\pis{X/Y}} \text{Hilb}_{X/Y}.
$$
Thus $\iota$ is representable by finitely-presented, quasi-affine
monomorphisms of schemes.  

\mni
It only remains to prove that $\iota$ is \'etale.  Because $\iota$ is
a finitely-presented it remains to prove
that $\iota$ is formally \'etale.  Thus, let $Y'=\SP A'$ where $A'$ is
a local Artin $\OO_Y$-algebra with maximal ideal $\mathfrak{m}$ and residue
field $\kappa$.  
And let 
$$
0 \rightarrow J \rightarrow A' \rightarrow A \rightarrow 0
$$
be an infinitesimal extension, i.e., $\mathfrak{m}J$ is zero.
Let $(\mc{F},u)$ be a pseudo-ideal sheaf of
$X/Y$ over $Y'$, and assume the basechange to $\SP A$ is an ideal
sheaf with $A$-flat cokernel.  Since $\iota$ is a monomorphism,
formal \'etaleness for $\iota$
precisely says that $Y'\rightarrow \pis{X/Y}$ factors through $\iota$,
i.e., $u$ is injective and $\text{Coker}(u)$ is $A'$-flat.

\mni
To prove this use the local flatness criterion,
e.g., as formulated in ~\cite[Proposition 11.3.7]{EGA4}.  This
criterion is an equivalence between the conditions of
\begin{enumerate}
\item[(i)]
injectivity of $u$ is injective and 
$A'$-flatness of $\text{Coker}(u)$
\item[(ii)]
and injectivity of
$$
u\otimes_{A'}\kappa: \mc{F}\otimes_{A'}\kappa \rightarrow
\OO_{X_{Y'}}\otimes_{A'} \kappa
$$
\end{enumerate}
By hypothesis, (i) holds after
basechange to $A$.  Thus (ii) holds after
basechange to $A$.  But since $A'/\mf{m}$ equals $A/\mf{m}$, (ii) for
the original family over $A'$ is precisely the same as (ii) for the
basechange family over $A$.
Thus also (i) holds over $A'$.
\end{proof}

\mni
The significance of pseudo-ideal sheaves has to do with restriction to
Cartier divisors.  Let $D$ be an effective Cartier divisor in $X$,
considered as a closed subscheme of $X$, and assume $D$ is flat over
$Y$.  Denote by $\mc{I}_D$ the pullback 
$$
\mc{I}_D :=
\mc{I}\otimes_{\OO_X} \OO_D
$$ on $\text{Hilb}_{X/Y}\times_X D$.  And denote by
$$
u_D:\mc{I}_D \rightarrow \OO_{\text{Hilb}_{X/Y}\times_X D}
$$
the restriction of $u$.

\begin{prop} \label{prop-flat}
\marpar{prop-flat}
The locally finitely presented, quasi-coherent sheaf $\mc{I}_D$ is
flat over $\text{Hilb}_{X/Y}$.  Thus the pair $(\mc{I}_D,u_D)$ is a
flat family of pseudo-ideal sheaves of $D/Y$ over $\text{Hilb}_{X/Y}$. 
\end{prop}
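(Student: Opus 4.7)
The proposition splits into (a) the claim that $(\mc{I}_D, u_D)$ is a family of pseudo-ideal sheaves of $D/Y$, which reduces to $\mc{I}_D$ being locally finitely presented and the induced alternating map $u'_D$ being zero, and (b) the substantive claim that $\mc{I}_D$ is flat over $\text{Hilb}_{X/Y}$. Part (a) is formal: $\mc{I}$ is locally finitely presented by the proof of Proposition ~\ref{prop-Hilbopen}, and tensoring with the locally finitely presented sheaf $\OO_{\text{Hilb}_{X/Y} \times_Y D}$ preserves this property; the alternating expression $f_1 \wedge f_2 \mapsto u(f_1)f_2 - u(f_2)f_1$ defining $u'$ commutes with restriction along $\OO_{X_{\text{Hilb}}} \to \OO_{D_{\text{Hilb}}}$, so $u' = 0$ forces $u'_D = 0$.

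For flatness I would argue stalkwise. Fix $x \in \text{Hilb}_{X/Y} \times_Y D$ lying over $s \in \text{Hilb}_{X/Y}$, $y \in Y$, and $x' \in D \subset X$, and set $A = \OO_{\text{Hilb}_{X/Y}, s}$, $B = \OO_{Y, y}$. Since $D$ is an effective Cartier divisor in $X$, its ideal at $x'$ is generated by a non-zero-divisor $t \in \OO_{X, x'}$. The decisive use of the hypothesis that $D \to Y$ is flat will be the following: tensoring the short exact sequence
$$0 \to \OO_{X, x'} \xrightarrow{t} \OO_{X, x'} \to \OO_{D, x'} \to 0$$
over $B$ with any $B$-module $M$ remains exact because $\text{Tor}_1^B(\OO_{D, x'}, M) = 0$. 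Consequently $t$ is a non-zero-divisor on $\OO_{X, x'} \otimes_B M$ for every $B$-module $M$. Since $\OO_{X_{\text{Hilb}}, x}$ is a localization of $\OO_{X, x'} \otimes_B A$, specializing to arbitrary $A$-modules $N$ (restricted to $B$-modules along $B \to A$) gives that $t$ is a non-zero-divisor on $\OO_{X_{\text{Hilb}}, x} \otimes_A N$ for every such $N$.

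Finally I propagate this to $\mc{I}$ via a Tor computation. By the proof of Proposition ~\ref{prop-Hilbopen}, both $\mc{I}_x$ and $\OO_{C, x}$ are $A$-flat, so $\mc{I}_x \otimes_A N$ still injects into $\OO_{X_{\text{Hilb}}, x} \otimes_A N$; hence $t$ is a non-zero-divisor on $\mc{I}_x \otimes_A N$. Applying the long exact Tor sequence to $0 \to \mc{I}_x \xrightarrow{t} \mc{I}_x \to (\mc{I}_D)_x \to 0$ (the first map is injective because $t$ is a non-zero-divisor on $\mc{I}_x \subset \OO_{X_{\text{Hilb}}, x}$) and using the $A$-flatness of $\mc{I}_x$ yields
$$0 \to \text{Tor}_1^A((\mc{I}_D)_x, N) \to \mc{I}_x \otimes_A N \xrightarrow{t} \mc{I}_x \otimes_A N,$$
whose right-hand map is injective by the previous step. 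Thus $\text{Tor}_1^A((\mc{I}_D)_x, N) = 0$ for every $N$, so $(\mc{I}_D)_x$ is $A$-flat. The main obstacle I expect is precisely the step of the preceding paragraph: since only $D \to Y$ is assumed flat (and not $X \to Y$), one cannot simply invoke flatness of $\OO_{X_{\text{Hilb}}, x}$ as a module over $A$ in a family-theoretic way; the Tor-vanishing argument via $\OO_D$ is what allows the non-zero-divisor property of $t$ to survive arbitrary base change on the $\text{Hilb}$-side.
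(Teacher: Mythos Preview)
Your argument is correct and rests on the same core observation as the paper's --- the local equation $t$ of the Cartier divisor $D$ remains a non-zero-divisor after base change because $D$ is $Y$-flat --- but the two proofs organize the reduction differently. The paper invokes the local flatness criterion \cite[Proposition 11.3.7]{EGA4}: since $\mc{I}$ is flat over $\text{Hilb}_{X/Y}$, flatness of the cokernel $\mc{I}_D$ of $t:\mc{I}\otimes\OO_X(-D)\to\mc{I}$ follows once one checks that $t$ is injective on every fiber over a point of $\text{Hilb}_{X/Y}$; that fiberwise injectivity is then read off from $\textit{Tor}_1^{\OO_{X_\kappa}}(\mc{I}_\kappa,\OO_{D_\kappa})\cong\textit{Tor}_2^{\OO_{X_\kappa}}(\OO_{X_\kappa}/\mc{I}_\kappa,\OO_{D_\kappa})=0$, the last vanishing because $\OO_{D_\kappa}$ admits a length-one free resolution. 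Your route instead establishes $\text{Tor}_1^A((\mc{I}_D)_x,N)=0$ directly for the local ring $A$ of $\text{Hilb}_{X/Y}$ and every $A$-module $N$, trading the citation of the flatness criterion for an explicit stalkwise computation. The paper's version is a bit more streamlined; yours is more self-contained. One small correction: the opening paragraph of Section~\ref{sec-pis} does assume $f:X\to Y$ flat, so the concern in your final paragraph is misplaced --- though your proof is unaffected, since it uses only the flatness of $D$ over $Y$, not of $X$.
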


\begin{proof}
Associated to the Cartier divisor $D$ there is an injective homomorphism of
invertible sheaves 
$$
t':\OO_X(-D) \xrightarrow \OO_X.
$$  
This induces a
morphism of locally finitely presented, quasi-coherent sheaves
$$
t:\mc{I}\otimes_{\OO_X} \OO_X(-D) \rightarrow \mc{I}.
$$
The cokernel of $t$ is $\mc{I}_D$.  
By the local flatness criterion, \cite[Proposition 11.3.7]{EGA4}, to
prove that $t$ is injective and $\mc{I}_D$ is flat over
$\text{Hilb}_{X/Y}$, it suffices to prove that the ``fiber'' of $t$
over every point of $\text{Hilb}_{X/Y}$ is injective.  Thus, let
$\kappa$ be a field, let $y:\SP \kappa Y$ be a morphism, and let
$\mc{I}_y \xrightarrow \OO_{X_y}$ be an ideal sheaf.  Since $D$ is
$Y$-flat, the homomorphism of locally free sheaves
$$
t'_y:\OO_X(-D)\otimes_{\OO_Y} \kappa \rightarrow \OO_X\otimes_{\OO_Y}
\kappa
$$
is injective, thus a flat resolution of $\OO_D\otimes_{\OO_Y} \kappa$.
In particular, $\textit{Tor}_2^{\OO_{X_y}}(\OO_{X_y}/\mc{I}_y,
\OO_{D_y})$ equals zero because there is a flat resolution of
$\OO_{D_y}$ with amplitude $[-1,0]$.  By the long exact sequence
of Tor associated to the short exact sequence
$$
0 \rightarrow \mc{I}_y \rightarrow \OO_{X_y} \rightarrow
\OO_{X_y}/\mc{I}_y \rightarrow 0,
$$
there is an isomorphism
$$
\textit{Tor}_1^{\OO_{X_y}}(\mc{I}_y,
\OO_{D_y}) \cong
\textit{Tor}_2^{\OO_{X_y}}(\OO_{X_y}/\mc{I}_y,
\OO_{D_y}) = 0.
$$
But this Tor sheaf is precisely the kernel of
$$
t_y:\mc{I}_y\otimes_{\OO_X} \OO_X(-D) \rightarrow \mc{I}_y.
$$
Thus $t_y$ is injective, and so $\mc{I}_D$ is flat over
$\text{Hilb}_{X/Y}$.  
\end{proof}

\begin{notat} \label{notat-iotaD}
\marpar{notat-iotaD}
Denote by 
$$
\iota_D:\text{Hilb}_{X/Y} \rightarrow \pis{D/Y}
$$
the $1$-morphism associated to the flat family $(\mc{I}_D,u_D)$
of pseudo-ideal sheaves of $D/Y$ over $\text{Hilb}_{X/Y}$.
This is the \emph{divisor restriction map}.  
\end{notat}

\mni
Since $\text{Hilb}_{X/Y}$ and $\pis{D/Y}$ are both locally finitely
presented over $Y$, $\iota_D$ is locally finitely presented.  Since
$\Hilb_{X/Y}$ is an algebraic space, $\iota_D$ is representable (by
morphisms of algebraic spaces).  Since the diagonal morphism of
$\pis{D/Y}$ over $Y$ is separated, and since $\text{Hilb}_{X/Y}$ is
separated over $Y$, $\iota_D$ is separated.  

\subsection{Infinitesimal study of the divisor restriction map} \label{ssec-inf}
\marpar{ssec-inf}

\mni
Let
$A'$ be a local Artin $\OO_Y$-algebra with maximal ideal
$\mathfrak{m}$ and residue field $\kappa$.  
And let 
$$
0 \rightarrow J \rightarrow A' \rightarrow A \rightarrow 0
$$
be an infinitesimal extension, i.e., $\mathfrak{m} J$ is zero.  Denote
by $X_{A'}$, resp. $X_A$, $X_\kappa$, the fiber product of
$X\rightarrow Y$ with $\SP A' \rightarrow Y$, resp. $\SP A \rightarrow
Y$, $\SP \kappa \rightarrow Y$.

\mni
Let
$(\mc{F}_{A'},u_{A'})$ be a pseudo-ideal sheaf of $D/Y$ over $\SP A'$.
Denote by $(\mc{F}_A,u_A)$, resp. $(\mc{F}_\kappa,u_\kappa)$, the
restriction of $(\mc{F}_{A'},u_{A'})$ to $A$, resp. to $\kappa$. 
Let $\mc{I}_A$ be the ideal sheaf of a flat family $C_A$ of closed
subschemes of $X/Y$ over $\SP A$.  Denote by $\mc{I}_\kappa$,
resp. $C_\kappa$, the restriction of $\mc{I}_A$ to $\kappa$, resp. of
$C_A$ to $\kappa$. 
And assume that $\iota_D$ sends
$\mc{I}_A$ to $(\mc{F}_A,u_A)$.

\begin{prop} \label{prop-inf}
\marpar{prop-inf}
Let $n$ be a nonnegative integer.
Assume that $C_\kappa$ is a regular immersion of codimension $n$ in
$X_\kappa$, 
cf. \cite[D\'efinition 16.9.2]{EGA4} (since $X_\kappa$ is an algebraic
space, in that definition 
one must replace the Zariski covering by affine schemes 
by an \'etale covering by affine schemes).     
\begin{enumerate}
\item[(i)]
The morphism $\iota_D$ is \emph{locally unobstructed} at $C_\kappa$ in
the following sense.  For every \'etale morphism $\SP R_{A'}
\rightarrow X_{A'}$ such that the pullback $I_\kappa$ of $\mc{I}_\kappa$ in
$R_\kappa$ is generated by a regular sequence, there exists an ideal
$I_{A'}$ in $R_{A'}$ whose restriction $I_A$ to $R_A$ equals the
pullback of $\mc{I}_A$ and whose ``local pseudo-ideal sheaf''
$(I_{A'}\otimes_{\OO_X} \OO_D,v)$ equals the pullback of
$(\mc{F}_{A'},u_{A'})$.   Moreover, the set of such ideals $I_{A'}$ is
naturally a torsor for the $R_\kappa$-submodule 
$$
J\otimes_\kappa \OO_X(-D)\cdot\text{Hom}_{R_\kappa}(I_\kappa,R_\kappa/I_\kappa)
$$
of
$$
J\otimes_\kappa \text{Hom}_{R_\kappa}(I_\kappa,R_\kappa/I_\kappa)
$$ 
(here
$\OO_X(-D)\cdot$ denote multiplication by the inverse image ideal of
$\OO_X(-D)$).  
\item[(ii)]
There exists an element $\omega$ in 
$$
J\otimes_\kappa H^1(C_\kappa,\OO_X(-D)\cdot
\textit{Hom}_{\OO_{C_\kappa}}(\mc{I}_\kappa/\mc{I}_\kappa^2,\OO_{C_\kappa}))
$$
which equals $0$ if and only if there exists a flat family $C_{A'}$ of
closed subschemes of $X/Y$ over $\SP A'$ whose restriction to $\SP A$
equals $C_A$ and whose image under $\iota_D$ equals
$(\mc{F}_{A'},u_{A'})$.  When it equals $0$, the set of such families
$C_{A'}$ is naturally a torsor for the $\kappa$-vector space
$$
J\otimes_\kappa H^0(C_\kappa,\OO_X(-D)\cdot
\textit{Hom}_{\OO_{C_\kappa}}(\mc{I}_\kappa/\mc{I}_\kappa^2,\OO_{C_\kappa})).
$$
\end{enumerate}
In particular, if $h^1(C_\kappa,\OO_X(-D)\cdot
\textit{Hom}_{\OO_{C_\kappa}}(\mc{I}_\kappa/\mc{I}_\kappa^2,\OO_{C_\kappa}))$
equals $0$, then $\iota_D$ is smooth at $[C_\kappa]$.  
\end{prop}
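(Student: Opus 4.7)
The plan is standard deformation-theoretic bookkeeping, twisted by the requirement that the $D$-restriction $\iota_D(\mc{I}_{A'})$ match the prescribed pseudo-ideal sheaf $(\mc{F}_{A'},u_{A'})$. The two geometric ingredients are: a regular immersion of codimension $n$ is locally unobstructed with deformations forming a torsor for the normal sheaf $\textit{Hom}_{\OO_{C_\kappa}}(\mc{I}_\kappa/\mc{I}_\kappa^2,\OO_{C_\kappa})$; and imposing compatibility with a fixed pseudo-ideal sheaf on $D$ cuts out the subsheaf $\OO_X(-D)\cdot \textit{Hom}$ of the normal sheaf.

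For part (i), I work in the chart $\SP R_{A'}$ with local equation $t\in R_{A'}$ for $D$. Because $I_\kappa$ is generated by a regular sequence, standard unobstructedness produces some $A'$-flat ideal $\widetilde I_{A'}\subset R_{A'}$ lifting $I_A$. Both $\widetilde I_{A'}\otimes_{\OO_X}\OO_D$ and the pullback of $\mc{F}_{A'}$ are pseudo-ideal lifts of the same $\kappa$-reduction over $R_{A'}/(t)$, and the torsor of such lifts is canonically $J\otimes_\kappa \Hom(I_\kappa/tI_\kappa,\,R_\kappa/(I_\kappa+(t)))$. The key local claim is that
\[
\Hom(I_\kappa,R_\kappa/I_\kappa) \longrightarrow \Hom(I_\kappa/tI_\kappa,\,R_\kappa/(I_\kappa+(t)))
\]
is surjective, which is immediate since the regular sequence freely generates $I_\kappa/I_\kappa^2$ and any assignment on its classes can be lifted to $R_\kappa$. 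A lift of the discrepancy class to an element of $J\otimes \Hom(I_\kappa,R_\kappa/I_\kappa)$ adjusts $\widetilde I_{A'}$ to the desired $I_{A'}$. The ambiguity is the kernel of this restriction map, namely those $\phi$ whose image lies in $t\cdot R_\kappa/I_\kappa$; this kernel equals $\OO_X(-D)\cdot \Hom(I_\kappa,R_\kappa/I_\kappa)$, yielding the torsor statement of (i).

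For part (ii), choose an étale cover $\{U_i\}$ of $X_\kappa$ satisfying the hypothesis of (i), and pick via (i) local lifts $I_{A'}^{(i)}$ matching $(\mc{F}_{A'},u_{A'})$. On a double overlap $U_i\cap U_j$ the two lifts differ by a section
\[
c_{ij}\in J\otimes_\kappa \OO_X(-D)\cdot \textit{Hom}_{\OO_{C_\kappa}}(\mc{I}_\kappa/\mc{I}_\kappa^2,\OO_{C_\kappa})(U_i\cap U_j),
\]
where regularity of the immersion is used to identify $\textit{Hom}(\mc{I}_\kappa,\OO_{C_\kappa})$ with $\textit{Hom}(\mc{I}_\kappa/\mc{I}_\kappa^2,\OO_{C_\kappa})$. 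The $\{c_{ij}\}$ form a \v{C}ech $1$-cocycle and define
\[
\omega\in J\otimes_\kappa H^1\!\bigl(C_\kappa,\,\OO_X(-D)\cdot \textit{Hom}_{\OO_{C_\kappa}}(\mc{I}_\kappa/\mc{I}_\kappa^2,\OO_{C_\kappa})\bigr),
\]
which is the sought obstruction: vanishing of $\omega$ allows a simultaneous adjustment making the local lifts glue to a global $\mc{I}_{A'}$, and once a global lift exists the set of all such lifts is a torsor for $H^0$ of the same sheaf. The concluding smoothness assertion $h^1=0\Rightarrow \iota_D$ smooth at $[C_\kappa]$ then follows from the standard infinitesimal lifting criterion.

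The principal obstacle is the surjectivity in part (i) identifying the kernel of ``ideal lifts $\to$ pseudo-ideal lifts'' with $\OO_X(-D)\cdot \Hom$; this is where the regularity hypothesis on $I_\kappa$ and the precise bookkeeping for the twist by the local equation $t$ of $D$ intervene decisively. Once that local computation is secured, the torsor identifications, the \v{C}ech construction of $\omega$, and the passage from ``$\omega=0$ and $H^0$-torsor'' to smoothness of $\iota_D$ are formal.
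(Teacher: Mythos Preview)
Your part (ii) and the concluding smoothness statement match the paper's argument essentially verbatim: local lifts, \v{C}ech cocycle, torsor for $H^0$. No issue there.

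The substantive divergence is in part (i). The paper does not ``lift arbitrarily and then adjust.'' Instead it constructs the lift $I_{A'}$ directly: it first shows that the given pseudo-ideal sheaf $\mc{F}_{A'}$ admits a presentation
\[
(R_{A'}\otimes_{\OO_X}\OO_D)^{\binom{n}{2}} \xrightarrow{\,w'_{A'}\,}
(R_{A'}\otimes_{\OO_X}\OO_D)^{n} \longrightarrow \mc{F}_{A'} \longrightarrow 0
\]
in which the syzygy map $w'_{A'}$ is exactly the Koszul-type map determined by $w_{A'}=u_{A'}\circ a$ (this is where the pseudo-ideal condition $u'=0$ is used). Then a snake-lemma identification
\[
R_{A'}/(J\cdot\OO_X(-D)\,R_{A'}) \;\cong\; R_A \times_{(R_A\otimes\OO_D)} (R_{A'}\otimes\OO_D)
\]
lets one lift each $w_{A'}(\mathbf e_i)$ to $v_{(A',i)}\in R_{A'}$ simultaneously matching $v_{(A,i)}$; the torsor statement drops out of the ambiguity in this lift.

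Your route can be made to work, but the step you treat as known is the entire content of the proof. You assert that pseudo-ideal lifts of $(I_A\otimes\OO_D,u_A)$ over $A'$ form a torsor for $J\otimes_\kappa\Hom(I_\kappa/tI_\kappa,\,R_\kappa/(I_\kappa+(t)))$. This is not a standard fact: a pseudo-ideal sheaf is a pair (module, map), and for $n\ge 2$ the module $I_\kappa/tI_\kappa$ is \emph{not} locally free, so its deformations as a module are nontrivial and interact with deformations of $u$ and with the constraint $u'=0$. Establishing your torsor claim requires precisely the presentation argument the paper carries out. Once that presentation is in hand, your surjectivity-of-$\Hom$ and kernel-equals-$\OO_X(-D)\cdot\Hom$ computations are correct and your adjust-to-match argument goes through; but as written, the sentence ``the torsor of such lifts is canonically $J\otimes\Hom(\ldots)$'' is exactly the gap.
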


\begin{proof}
\textbf{(i)}  Let $v_{(A,1)},\dots,v_{(A,n)}$ be a regular sequence in
$R_A$ generating $I_A$.  Denote by $v_A$ the $R_A$-module homomorphism
$$
v_A:R_A^{\oplus n} \rightarrow R_A, \ \ v_A(\mathbf{e}_i) = v_{(A,i)}.
$$
Denote the associated $R_A$-module homomorphism by
$$
v'_A:R_A^{\oplus \binom{n}{2}} \rightarrow R_A^{\oplus n}, \ \
v'_A(\mathbf{e}_i\wedge \mathbf{e}_j) = v_{(A,i)}\mathbf{e}_j -
v_{(A,j)}\mathbf{e}_i.  
$$
Since $(v_{(A,1)},\dots,v_{(A,n)})$ is a regular sequence, the
following sequence is exact
$$
R_A^{\oplus \binom{n}{2}} \xrightarrow{v'_A} R_A^{\oplus n}
\rightarrow I_A \rightarrow 0.
$$
Thus there is an exact sequence
$$
(R_A\otimes_{\OO_X}\OO_D)^{\oplus \binom{n}{2}} \xrightarrow{v'_A\otimes
  \text{Id}} (R_A\otimes_{\OO_X}\OO_D)^{\oplus n} \rightarrow
(I_A\otimes_{\OO_X} \OO_D) \rightarrow 0.
$$

\mni
Denote by $F_{A'}$ the $R_{A'}$-module whose associated quasi-coherent
sheaf is the pullback of $\mc{F}_{A'}$.  Since $\mc{F}_A$ equals
$\mc{I}_A\otimes_{\OO_X} \OO_D$, there is an isomorphism
$$
F_{A'}/JF_{A'} \cong I_A\otimes_{\OO_X} \OO_D
$$
compatible with the maps $u_{A'}$ and $u_A$.  Since
$(R_{A'}\otimes_{\OO_X} \OO_D)^{\oplus
  n}$ is a projective $(R_{A'}\otimes_{\OO_X}\OO_D)$-module, 
there exists an $(R_{A'}\otimes_{\OO_X}\OO_D)$-module
homomorphism
$$
a:(R_{A'}\otimes_{\OO_X}\OO_D)^{\oplus n} \rightarrow F_{A'}
$$
whose restriction to $A$ is the surjection above.  So, by Nakayama's
lemma, this map is also surjective.  Since both the source and target
of the surjection are $A'$-flat, also
the kernel is $A'$-flat.  Thus there is also a lifting of the set of
generators of the kernel, i.e., there is an exact sequence of
$(R_{A'}\otimes_{\OO_X}\OO_D)$-modules
$$
(R_{A'}\otimes_{\OO_X}\OO_D)^{\oplus \binom{n}{2}} \xrightarrow{b}
(R_{A'}\otimes_{\OO_X}\OO_D)^{\oplus n} \xrightarrow{a} 
F_{A'} \rightarrow 0
$$
whose restriction to $A$ is the short exact sequence above.  

\mni
The composition of the surjection with $u_{A'}$ defines an 
$(R_{A'}\otimes_{\OO_X}\OO_D)$-module
homomorphism
$$
w_{A'}:(R_{A'}\otimes_{\OO_X}\OO_D)^{\oplus n} \rightarrow
(R_{A'}\otimes_{\OO_X}\OO_D)
$$
whose restriction to $A$ equals $v_A\otimes \text{Id}$.  There is an
associated map
$$
w'_{A'}:(R_{A'}\otimes_{\OO_X}\OO_D)^{\oplus \binom{n}{2}} \rightarrow
(R_{A'}\otimes_{\OO_X}\OO_D)^{\oplus n}, \ \
w'_{A'}(\mathbf{e}_i\wedge \mathbf{e}_j) =
w_{A'}(\mathbf{e}_i)\mathbf{e}_j - w_{A'}(\mathbf{e}_j)\mathbf{e}_i.
$$
Since
$(\mc{F}_{A'},u_{A'})$ is a pseudo-ideal sheaf, the induced map
$u'_{A'}$ equals $0$.  Therefore the image of $w'_{A'}$ is contained
in the kernel of $a$.  Since $(R_{A'}\otimes_{\OO_X} \OO_D)^{\oplus
  \binom{n}{2}}$ is a free $(R_{A'}\otimes_{\OO_X} \OO_D)$-module,
there is a lifting
$$
c:(R_{A'}\otimes_{\OO_X}\OO_D)^{\oplus \binom{n}{2}} \rightarrow
(R_{A'}\otimes_{\OO_X}\OO_D)^{\oplus \binom{n}{2}} 
$$
such that $w'_{A'} = b\circ c$.  In particular, the restriction of $c$
to $A$ is an isomorphism.  Thus, by Nakayama's lemma, $c$ is
surjective.  A surjection of free modules of the same finite rank is
automatically an isomorphism.  Thus there is a presentation
$$
(R_{A'}\otimes_{\OO_X}\OO_D)^{\oplus \binom{n}{2}} \xrightarrow{w'_{A'}}
(R_{A'}\otimes_{\OO_X}\OO_D)^{\oplus n} \xrightarrow{a} 
F_{A'} \rightarrow 0.
$$

\mni
Because both $R_{A'}$ and $R_{A'}\otimes_{\OO_X}\OO_D$ are $A'$-flat,
there is a commutative diagram of exact sequences
$$
\begin{CD}
0 @>>> J\otimes_{\kappa} R_\kappa @>>> R_{A'} @>>> R_A @>>> 0 \\
& &  @VVV  @VVV  @VVV \\
0 @>>> J\otimes_\kappa R_\kappa \otimes_{\OO_X} \OO_D @>>>
R_{A'}\otimes_{\OO_X} \OO_D @>>> R_A \otimes_{\OO_X} \OO_D @>>> 0
\end{CD}
$$
where the vertical maps are each surjective.  By the snake lemma, the
induced map
$$
R_{A'}/(J\OO_X(-D)\cdot R_{A'}) \rightarrow R_A
\times_{(R_A\otimes_{\OO_X} \OO_D)} ( R_{A'}\otimes_{\OO_X} \OO_D )
$$
is an isomorphism.  
Thus, for every integer $i=1,\dots,n$, there exists an element
$v_{(A',i)}$ in $R_{A'}$ whose image in $R_A$ equals $v_{(A,i)}$ and
whose image in $R_{A'}\otimes_{\OO_X}\OO_D$ equals
$w_{A'}(\mathbf{e}_i)$.  Moreover, the set of all such elements is
naturally a torsor for $J\otimes_\kappa (\OO_X(-D)\cdot R_\kappa)$.
In other words, there is an $R_{A'}$-module homomorphism
$$
v_{A'}:R_{A'}^{\oplus n} \rightarrow R_{A'}
$$
whose restriction to $A$ equals $v_A$ and such that $v_{A'}\otimes
\text{Id}$ equals $w_{A'}$.  

\mni
Since $(v_{(A,1)},\dots,v_{(A,n)})$ is a
regular sequence in $R_A$, also $(v_{(A',1)},\dots,v_{(A',n)})$ is a
regular sequence in $R_{A'}$. One way to see this is to tensor the
Koszul complex $K^\bullet(R_{A'},v_{A'})$ of $v_{A'}$ with the short exact
sequence of $A'$-modules
$$
0 \rightarrow J \rightarrow A' \rightarrow A \rightarrow 0.
$$
Since the terms in the Koszul complex are free $R_{A'}$-modules, and
since $R_{A'}$ is $A'$-flat, the associated sequence of complexes is
exact
$$
0 \rightarrow J\otimes_A K^\bullet(R_A,v_A) \rightarrow
K^\bullet(R_{A'},v_{A'}) \rightarrow K^\bullet(R_A,v_A) \rightarrow 0.
$$
Thus there is a 
long exact sequence of Koszul cohomology
$$
\dots \rightarrow J\otimes_A H^n(K^\bullet(R_A,v_{A})) \rightarrow
H^n(K^\bullet(R_{A'}, v_{A'})) \rightarrow H^n(K^\bullet(R_A,v_A))
\rightarrow J\otimes_A H^{n+1}(K^\bullet(R_A,v_A)) \rightarrow \dots
$$
Since $v_A$ is regular, $H^{n-1}(K^\bullet(R_A,v_A))$ is zero, which
then implies $H^{n-1}(K^\bullet(R_{A'},v_{A'}))$ by the long exact
sequence above.  Thus also $v_{A'}$ is regular.  Moreover, this gives
a short exact sequence
$$
0 \rightarrow J\otimes_\kappa (R_A/I_A) \rightarrow
R_A'/\text{Image}(v_{A'}) \rightarrow R_A/I_A \rightarrow 0
$$
from which it follows that $R_{A'}/\text{Image}(v_{A'})$ is
$A'$-flat.  Denote by $I_{A'}$ the image of $v_{A'}$.  

\mni
Both the pseudo-ideal $I_{A'}\otimes_{\OO_X}\OO_D$ and $F_{A'}$ equal
the cokernel of $w'_{A'}$.  Thus there is a unique isomorphism between
them compatible with $w'_{A'}$.  Moreover, since the compositions
$$
(R_{A'}\otimes_{\OO_X} \OO_D)^{\oplus n} \xrightarrow{a} F_A'
\xrightarrow{u_{A'}} (R_{A'}\otimes_{\OO_X} \OO_D)
$$
and
$$
(R_{A'}\otimes_{\OO_X} \OO_D)^{\oplus n} \xrightarrow{v_{A'}\otimes
  \text{Id}} I_A'\otimes_{\OO_X} \OO_D
\rightarrow (R_{A'}\otimes_{\OO_X} \OO_D)
$$ 
both equal $w_{A'}$, the isomorphism above is compatible with the maps
to $(R_{A'}\otimes_{\OO_X} \OO_D)$.  Thus it is an isomorphism of
pseudo-ideal sheaves.  Therefore there exists
$I_{A'}$ satisfying all the conditions
in the proposition.

\mni
For every lift $I_{A'}$ of $I_A$ which is $A$-flat, the surjection
$v_A:R_A^{\oplus n} \rightarrow I_A$ lifts to a surjection
$R_{A'}^{\oplus n} \rightarrow  I_{A'}$.  Composing this surjection
with the injection $I_{A'} \hookrightarrow R_{A'}$, it follows that
every lift $I_{A'}$ arises from a lift $v_{A'}$ of $v_A$.  As
mentioned previously, the set of lifts $v_{A'}$ whose restriction to $A$
equals $v_A$ and with $v_{A'}\otimes \text{Id}$ equal to $w_{A'}$ is
naturally a torsor for 
$$
J\otimes_\kappa \text{Hom}_{R_{A'}}(R_{A'}^{\oplus n},\OO_X(-D)\cdot
R_\kappa).
$$
But the translate of a lift by a homomorphism with image in
$I_\kappa$ gives the same ideal $I_{A'}$ (just different surjections
from $R_{A'}^{\oplus n}$ to the ideal).  Thus, the set of lifts
$I_{A'}$ of $I_A$ whose pseudo-ideal sheaf is the pullback of
$(\mc{F}_{A'},u_{A'})$ is naturally a torsor for
$$
J\otimes_\kappa \text{Hom}_{R_{A'}}(R_{A'}^{\oplus n},\OO_X(-D)\cdot
(R_\kappa/I_\kappa)) = J\otimes_\kappa
\text{Hom}_{R_\kappa}(I_\kappa,R_\kappa/I_\kappa).
$$

\mni
\textbf{(ii)}
Let $\Gamma$ be an indexing set and let $(\SP R^\gamma_{A'}
\rightarrow X_{A'})_{\gamma \in \Gamma}$ be an \'etale covering such
that for every $\gamma$, either $I^\gamma_\kappa \subset
R^\gamma_\kappa$ is generated by a regular sequence of length $n$ or
else equals $R^\gamma_\kappa$.  By the hypothesis that $C_\kappa$ is a
regular immersion of codimension $n$, there exists such a covering.  

\mni
Suppose first that $I^\gamma_\kappa$ equals $R^\gamma_\kappa$.  Then
also $(F^\gamma_\kappa,u^\gamma_\kappa)$ is isomorphic to $R^\gamma_\kappa
\otimes_{\OO_X} \OO_D \xrightarrow{=} R^\gamma_\kappa \otimes_{\OO_X}
\OO_D$.  It is straightforward to see that the only deformations over
$A'$
of
$R^\gamma_\kappa \xrightarrow{=} R^\gamma_\kappa$, resp. 
$R^\gamma_\kappa \otimes_{\OO_X} \OO_D \xrightarrow{=} R^\gamma_\kappa
\otimes_{\OO_X} 
\OO_D$, as pseudo-ideals are $R^\gamma_{A'} \xrightarrow{=} R^\gamma_{A'}$,
resp. $R^\gamma_{A'} \otimes_{\OO_X} \OO_D \xrightarrow{=}
R^\gamma_{A'}\otimes_{\OO_X} \OO_D$.  Thus there is a lifting
$I^\gamma_{A'}$ of $I^\gamma_A$, and it is unique.

\mni
On the other hand, if $I^\gamma_\kappa$ is generated by a regular
sequence of length $n$, by (i) there exist liftings $I^\gamma_{A'}$
and the set of all liftings is a torsor for 
$$
J\otimes_\kappa
\OO_X(-D)\cdot\text{Hom}_{R^\gamma_\kappa}(I^\gamma_\kappa ,
R^\gamma_\kappa/I^\gamma_\kappa).
$$
For a collection of liftings $(I^\gamma_{A'})_{\gamma \in \Gamma}$,
for every $\gamma_1,\gamma_2 \in \Gamma$, the basechanges of
$I^{\gamma_1}_{A'}$ and $I^{\gamma_2}_{A'}$ to
$$
R^{\gamma_1,\gamma_2}_{A'}:= R^{\gamma_1}_{A'}\otimes_{\OO_X}
R^{\gamma_2}_{A'}
$$ 
differ by an element $\omega^{\gamma_1,\gamma_2}$ in 
$$
J\otimes_\kappa
\OO_X(-D)\cdot\text{Hom}_{R^{\gamma_1,\gamma_2}_\kappa}(I^{\gamma_1,\gamma_2}_\kappa
, R^{\gamma_1,\gamma_2}_\kappa/I^\gamma_\kappa).
$$
It is straightforward to see that
$(\omega^{\gamma_1,\gamma_2})_{\gamma_1,\gamma_2 \in \Gamma}$ is a
$1$-cocycle for 
$$
\OO_X(-D)\cdot
\textit{Hom}_{\OO_{C_\kappa}}(\mc{I}_\kappa/\mc{I}_\kappa^2,\OO_{C_\kappa})
$$
with respect to the given \'etale covering.  Moreover, changing the
collection of lifts $(I^\gamma_{A'})$ by translating by elements in 
$$
(J\otimes_\kappa
\OO_X(-D)\cdot\text{Hom}_{R^\gamma_\kappa}(I^\gamma_\kappa ,
R^\gamma_\kappa/I^\gamma_\kappa))_{\gamma \in \Gamma}
$$
precisely changes $(\omega^{\gamma_1,\gamma_2})_{\gamma_1,\gamma_2\in
  \Gamma}$ by a $1$-coboundary.  Therefore, the cohomology class
$$
\omega \in  J\otimes_\kappa H^1(C_\kappa,\OO_X(-D)\cdot
\textit{Hom}_{\OO_{C_\kappa}}(\mc{I}_\kappa/\mc{I}_\kappa^2,\OO_{C_\kappa}))
$$
is well-defined and equals $0$ if and only if there is a lifting
$\mc{I}_{A'}$ as in the proposition.  And in this case, the set of
liftings is a torsor for the set of compatible families
$(t^\gamma)_{\gamma\in \Gamma}$ of elements $t^\gamma$ in 
$$
J\otimes_\kappa
\OO_X(-D)\cdot\text{Hom}_{R^\gamma_\kappa}(I^\gamma_\kappa ,
R^\gamma_\kappa/I^\gamma_\kappa),
$$
i.e., it is a torsor for the set of elements $t$ in
$$
J\otimes_\kappa H^0(C_\kappa,\OO_X(-D)\cdot
\textit{Hom}_{\OO_{C_\kappa}}(\mc{I}_\kappa/\mc{I}_\kappa^2,\OO_{C_\kappa})).
$$
\end{proof}


\section{Deforming comb-like curves} \label{sec-def}
\marpar{sec-def}

\mni
Let $k$ be an algebraically closed field.  Let $B$ be a smooth,
projective, connected curve over $k$.  Let $X$ be a normal, 
projective, connected $k$-scheme.  And let $\pi:X\rightarrow B$ be a
surjective $k$-morphism of relative dimension $d$.  
Assume that the geometric generic fiber of
$\pi$ is normal and contains a very free rational curve inside its
smooth locus.  

\begin{lem} \label{lem-comblike}
\marpar{lem-comblike}
There is an open subscheme of $\text{Hilb}_{X/k}$ precisely
parameterizing the comb-like curves.
\end{lem}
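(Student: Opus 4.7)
The plan is to rewrite ``$C$ is comb-like'' as the conjunction of three conditions on points of $\text{Hilb}_{X/k}$: (a) $C\hookrightarrow X$ is a regular immersion of codimension $\dim X-1$; (b) $C$ is geometrically connected; and (c) $C\cap X_{\eta_B}$ equals $\SP K(B)$, a single reduced $K(B)$-rational point. I will verify that each of (a), (b), (c) is an open condition on $\text{Hilb}_{X/k}$; their intersection is then the desired open subscheme. Note that the handle $s$ need not be given separately: by properness of $\pi$ and the valuative criterion, the $K(B)$-rational point in (c) extends uniquely to a section $s:B\to X$, and under (a) (regular immersion in a normal ambient forces the $(S_1)$ condition, so a generically reduced component is reduced) the closure of that point in $C$ is exactly the component $s(B)$.

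I would first treat condition (c), which is the least standard step.  For any flat family $\mathcal{C}\to T$ of $1$-dimensional subschemes of $X$, the difference of Euler characteristics
\[
\chi\bigl(\mathcal{C}_t,\,(\pi^*\OO_B(1))|_{\mathcal{C}_t}\bigr) - \chi\bigl(\mathcal{C}_t,\OO_{\mathcal{C}_t}\bigr) = d_h(t)\cdot\deg\OO_B(1),
\]
where $d_h(t)$ denotes the total horizontal degree of $\mathcal{C}_t$ over $B$, is locally constant on $T$ by flatness.  Hence the locus $\{d_h=1\}\subset\text{Hilb}_{X/k}$ is open and closed.  Any curve $C$ satisfying (a) has no embedded components (regular immersion in a normal $X$ is locally cut by a regular sequence, which preserves $(S_1)$), so the length of $C\cap X_{\eta_B}$ as a $K(B)$-scheme equals $d_h$.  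Thus on the intersection of $\{d_h=1\}$ with the (a)-locus, $C\cap X_{\eta_B}$ is the unique length-one $K(B)$-scheme $\SP K(B)$, yielding (c).

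Conditions (a) and (b) are then handled by standard openness results.  For (a), on any flat proper family of closed immersions the locus of fiberwise regular immersions of a fixed codimension is open: regular immersion is detected by the conormal sheaf being locally free of the expected rank and the associated Koszul complex being exact, both of which are open conditions under flatness (cf.\ EGA IV, \S19).  For (b), in any flat proper family the number of geometric connected components of fibers is upper semicontinuous on the base, so the locus where this number is at most one is open, and since every Hilbert-scheme fiber is nonempty, this is exactly the locus of geometrically connected fibers.

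The main obstacle I anticipate is really the bookkeeping in the first step: the translation between the scheme-theoretic condition (c), the numerical condition $d_h=1$, and the openness/closedness of $\{d_h=1\}$ on $\text{Hilb}_{X/k}$, together with the verification that, once (a) is assumed, no embedded components can inflate the length of the generic fiber over $B$.  Once these reductions are in place, the three open subschemes (a), (b) and $\{d_h=1\}$ may be intersected to produce the required open subscheme parameterizing exactly the comb-like curves.
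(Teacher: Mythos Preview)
Your proposal is correct and follows essentially the same approach as the paper's proof: both argue that the locus of regular immersions of the right codimension is open, and that the degree over $B$ is locally constant, so the condition ``degree $=1$'' cuts out an open (and closed) subset.  Your argument is more careful than the paper's two-line proof---you explicitly verify the absence of embedded components under (a) to identify the generic-fiber length with the horizontal degree, and you include the connectedness condition (b), which the paper's proof does not mention even though it appears in Definition~\ref{defn-comblike}.
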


\begin{proof}
There is an open subscheme of $\text{Hilb}_{X/k}$ parameterizing
regular immersions of codimension $d-1$.  The degree of such a curve
over $B$ is a locally constant integer-valued function.  Thus the open
subscheme of comb-like curves is precisely the open subset on which
the degree equals $1$.
\end{proof}

\begin{prop} \label{prop-comblike}
\marpar{prop-comblike}
Let $s:B\rightarrow X$ be a section of $\pi$ 
mapping the geometric generic point of $B$ into the very
free locus of the geometric generic fiber of $\pi$.  
Let $E$ be an effective Cartier divisor in $B$, and denote by $D$ the
inverse image 
Cartier divisor $\pi^{-1}(E)$ in $X$.  Let $C$ be a comb-like curve
with handle $s$ which is contained in the scheme-theoretic union of
$s(B)$ and $D$.  There exists an integer $N$ with the following
property.  For every comb-like curve $C'$ obtained by attaching at
least $N$ very free curves in fibers of $\pi$ to $C$ at general points
of $s(B)$ and with general normal directions to $s(B)$, the divisor 
restriction map, $\iota_D:\text{Hilb}_{X/k} \rightarrow \pis{D/k}$, is
smooth at $[C']$.  
\end{prop}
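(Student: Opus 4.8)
The plan is to apply the smoothness criterion of Proposition~\ref{prop-inf}. Since $C'$ is a comb-like curve it is a regular immersion (Lemma~\ref{lem-comblike}), and since $k$ is algebraically closed, Proposition~\ref{prop-inf} shows that $\iota_D$ is smooth at $[C']$ as soon as
\[
H^1\bigl(C',\ \OO_X(-D)\cdot\textit{Hom}_{\OO_{C'}}(\mc{I}_{C'}/\mc{I}_{C'}^2,\OO_{C'})\bigr)=0 .
\]
So I would look for $N$ such that, after attaching $\geq N$ general very free teeth in general fibers of $\pi$, this $H^1$ vanishes. Write $\mc{N}$ for the normal sheaf $\textit{Hom}_{\OO_{C'}}(\mc{I}_{C'}/\mc{I}_{C'}^2,\OO_{C'})$ of the regular immersion $C'\hookrightarrow X$ (a locally free $\OO_{C'}$-module) and $\mc{F}:=\OO_X(-D)\cdot\mc{N}$ for the subsheaf cut out by the inverse-image ideal of $\OO_X(-D)=\pi^*\OO_B(-E)$.

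The first step is local bookkeeping for $\mc{F}$. Write $C'=s(B)\cup\bigl(\bigcup_a R_a\bigr)\cup\bigl(\bigcup_{i=1}^m T_i\bigr)$, where the $R_a$ are the fibral components of the original $C$ --- all contained in $D$, since $C\subseteq s(B)\cup D$ by hypothesis --- and the $T_i$ are the added teeth, each an immersed very free rational curve in the smooth locus of a fiber $X_{b_i}$ with $b_i$ a general point of $B$ (so $b_i\notin\text{Supp}(E)$), meeting $s(B)$ transversally at the single node $p_i=s(b_i)$ with general tangent direction $v_i$. Because $\OO_X(-D)=\pi^*\OO_B(-E)$ is trivial on every fiber of $\pi$, the twist is trivial along each $T_i$ and each $R_a$; and because $R_a\subseteq D$, the inverse-image ideal of $\OO_X(-D)$ vanishes on $R_a$, so $\mc{F}|_{R_a}=0$. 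Hence $\mc{F}|_{T_i}=\mc{N}|_{T_i}$, while $\mc{F}|_C$ is supported on $s(B)\cong B$ and is there a locally free sheaf $\mc{E}$ obtained from $\mc{N}_{s(B)/X}\otimes\OO_X(-D)|_{s(B)}$ by a fixed modification supported on $s(\text{Supp}\,E)$ together with an upward elementary modification at each $p_i$ in the direction $v_i$. (This last point is the standard local computation of how a tooth alters the normal sheaf of the handle; the directions $v_i$ are general because the transversality of the section and the fiber gives an isomorphism $T_{p_i}X_{b_i}\xrightarrow{\ \sim\ }\mc{N}_{s(B)/X}\otimes\kappa(p_i)$.)

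The second step is to assemble the cohomology. From the Mayer--Vietoris sequence of the normalization of $C'$ at the new nodes $p_1,\dots,p_m$,
\[
0\longrightarrow\mc{F}\longrightarrow\mc{F}|_C\oplus\bigoplus_{i=1}^m\mc{F}|_{T_i}\longrightarrow\bigoplus_{i=1}^m\mc{F}\otimes\kappa(p_i)\longrightarrow 0 ,
\]
and the identity $\mc{N}_{X_{b_i}/X}|_{T_i}\cong\OO_{T_i}$ --- which forces $\mc{N}_{T_i/X}\cong\OO_{T_i}\oplus(\text{ample})$, so that the upward modification $\mc{F}|_{T_i}=\mc{N}|_{T_i}$ of $\mc{N}_{T_i/X}$ at $p_i$ has $H^1(T_i,\mc{F}|_{T_i})=0$ and is globally generated --- one reads off from the long exact sequence that $H^1(C',\mc{F})=0$ provided only that $H^1(B,\mc{E})=0$.

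Everything therefore reduces to making $H^1(B,\mc{E})=0$ by attaching enough general teeth, and this is the heart of the matter. Let $\mc{E}_0$ be the bundle $\mc{E}$ attached to the original $C$ (before adding any teeth); it is a fixed bundle on $B$, depending only on $X,B,\pi,s,E$ and $C$, and I would take $N:=h^1(B,\mc{E}_0)$. Adding a tooth at $(p_i,v_i)$ replaces $\mc{E}$ by an upward elementary modification at $p_i$ in the direction $v_i$, and the classical comb-smoothing estimate --- which I would prove by a dimension count --- says that for a vector bundle $\mc{V}$ on $B$, a general point $p$ and a general direction $v$, the modified bundle $\mc{V}'$ satisfies $h^1(B,\mc{V}')=\max\bigl(0,\,h^1(B,\mc{V})-1\bigr)$: in the long exact sequence of $0\to\mc{V}\to\mc{V}'\to\kappa(p)\to0$ the connecting map $\kappa(p)\to H^1(B,\mc{V})$ is, by Serre duality, evaluation of the functional $s\mapsto\langle s(p),v\rangle$ on $H^0(B,\mc{V}^{\vee}\otimes\omega_B)$, which is nonzero for general $(p,v)$ exactly because $h^0(B,\mc{V}^{\vee}\otimes\omega_B)=h^1(B,\mc{V})>0$. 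Iterating $N$ times kills $H^1(B,\mc{E})$ for any number $m\ge N$ of teeth in general position. This last step is the main obstacle, and it is precisely where the hypotheses that the teeth are numerous, in general fibers, and in general normal directions all get used; the remainder is the local bookkeeping above together with the easy verification that $s(B)\cup T_i$, a transverse union of smooth curves in the smooth locus of $X$, is again a comb-like curve, so that Lemma~\ref{lem-comblike} and Proposition~\ref{prop-inf} apply to it.
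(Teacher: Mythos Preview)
Your proof is correct and follows essentially the same path as the paper's: reduce via Proposition~\ref{prop-inf} to the vanishing of $H^1(C',\OO_X(-D)\cdot N_{C'/X})$, observe that this sheaf is supported on the subcurve $C'' = s(B)\cup\bigcup_i T_i$ (since the ideal of $D$ vanishes on the $R_a$), and then kill $H^1$ on the handle by attaching general very free teeth. The only difference is one of exposition: where the paper simply notes that $\OO_X(-D)\cdot N_{C'/X}|_{s(B)}$ is obtained from $\OO_X(-D)\cdot N_{C/X}|_{s(B)}$ by the construction of \cite[Lemma~2.6]{GHS} and invokes the argument of \cite[p.~62]{GHS} for the vanishing, you spell out the Mayer--Vietoris reduction to $H^1(B,\mc{E})$ and the inductive Serre-duality count showing each general tooth drops $h^1$ by one. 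Your explicit choice $N=h^1(B,\mc{E}_0)$ makes the bound concrete, which the paper leaves implicit in the citation.
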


\begin{proof}
Since $s$ maps the geometric generic point of $B$ into the very free
locus of $\pi$, with the exception of finitely many points, for every
$k$-point $b$ of $B$, $\pi$ is smooth at $s(b)$ and for every normal
direction to $s(B)$ in $X$ at $s(b)$, there exists a very free curve in
$\pi^{-1}(b)$ containing $s(b)$ and having the specified normal
direction.   
So long as the attachment points $s(b)$ is a smooth point of $C$
(which also holds with finitely many exceptions), the curve obtained
by attaching this very free curve to $C$ is again comb-like.  The
final exception is that we only attach very free curves at points $b$
not contained in $E$.  

\mni
Let $C'$ be a curve obtained from $C$ by attaching 
a collection of very free rational curves in fibers of $\pi$ at points
of $s(B)$ which are smooth points of $C$. 
Denote by $C''$ the subcurve of $C'$ consisting of $s(B)$ and all the
attached very free curves in fibers of $\pi$.  
Since
$C'\rightarrow X$ is a regular immersion, the sheaf
$\mc{I}_{C'/X}/\mc{I}_{C'/X}^2$ is a locally free $\OO_{C'}$-module,
where $\mc{I}_{C'/X}$ is the ideal sheaf of $C'$ in $X$.  Denote by
$N_{C'/X}$ the dual sheaf, 
$$
N_{C'/X} := \textit{Hom}_{\OO_{C'}}( \mc{I}_{C'/X}/\mc{I}_{C'/X}^2 ,
\OO_{C'} ).
$$
The subsheaf $\OO_X(-D)\cdot N_{C'/X}$ is supported on $C''$ since $C$
is contained in the scheme theoretic union of $s(B)$ and $D$.
Moreover, $\OO_X(-D)\cdot N_{C'/X}|_{s(B)}$ is obtained from
$\OO_X(-D)\cdot N_{C/X}|_{s(B)}$ by the construction from \cite[Lemma
2.6]{GHS}.  Thus, by the argument from \cite[p. 62]{GHS}, if $C'$ is
obtained from $C$ by attaching sufficiently many very free curves in
fibers of $\pi$ to general points of $s(B)$ with general normal
directions, then $h^1(C'', \OO_X(-D)\cdot N_{C'/X})$ equals $0$.
Thus, by Proposition ~\ref{prop-inf}, the divisor restriction map is
smooth at $[C']$.  
\end{proof}


\begin{cor} \label{cor-comblike}
\marpar{cor-comblike}
With the same hypotheses as in Proposition ~\ref{prop-comblike},
let $F$ be a subdivisor of $E$, let $s_F$ be a section of $\pi$ over $F$,
assume that the pseudo-ideal sheaf $\iota_D([C])$ deforms to
pseudo-ideal sheaves agreeing with $s_F$.  Then there exists a section
$s_\infty$ 
of $\pi$ whose restriction to $F$ equals $s_F$.  
\end{cor}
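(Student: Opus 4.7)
The plan is to use smoothness of the divisor restriction map $\iota_D$ at a suitably teethed comb-like curve to transport the given $\pis{D/k}$-deformation to an honest deformation in $\text{Hilb}_{X/k}$, then extract the desired section from its generic member.

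First I would apply Proposition \ref{prop-comblike} to attach sufficiently many very free teeth to $C$ in fibers of $\pi$ at general points of $s(B)$ with general normal directions, producing a comb-like curve $C'$ at which $\iota_D$ is smooth. Because these teeth lie in fibers $\pi^{-1}(b)$ with $b \notin \text{Supp}(E)$, they are scheme-theoretically disjoint from $D = \pi^{-1}(E)$, whence $\iota_D([C']) = \iota_D([C])$. The hypothesis therefore supplies a deformation of $\iota_D([C'])$ in $\pis{D/k}$ to a pseudo-ideal sheaf agreeing with $s_F$; encode this as a morphism $\varphi : \SP R \to \pis{D/k}$ from a DVR $R$ with residue field $k$, sending the closed point to $\iota_D([C'])$ and the generic point to a pseudo-ideal agreeing with $s_F$.

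Second, by smoothness of $\iota_D$ at $[C']$ (together with the infinitesimal analysis in Proposition \ref{prop-inf}), after replacing $R$ by its henselization $R'$, the morphism $\varphi$ lifts to $\tilde\varphi : \SP R' \to \text{Hilb}_{X/k}$ sending the closed point to $[C']$. This corresponds to an $R'$-flat family $\mathcal{C} \subset \SP R' \times_k X$ with closed fiber $C'$ and generic fiber $C'_\eta \subset X_{K'}$, where $K' = \text{Frac}(R')$. The comb-like locus in $\text{Hilb}_{X/k}$ is open (Lemma \ref{lem-comblike}) and contains $[C']$; hence the entire arc $\tilde\varphi$ factors through it, and $C'_\eta$ is comb-like over $B_{K'}$ with handle $s_{K'}$. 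Its unique component of degree one over $B_{K'}$ is the image of a section $\sigma : B_{K'} \to X_{K'}$ by the valuative criterion applied to the proper morphism $\pi_{K'}$. Moreover $\iota_D \circ \tilde\varphi = \varphi$ by construction, so $\sigma|_{F_{K'}} = s_F \times_k K'$.

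Finally, I would descend from $K'$ to $k$. The sections of $\pi$ extending $s_F$ over $F$ form a locally closed subscheme $\Sigma$ of $\text{Hom}_B(B,X)$, locally of finite type over $k$; the section $\sigma$ gives a $K'$-point of $\Sigma$, so $\Sigma$ is nonempty, and since $k$ is algebraically closed $\Sigma$ has a $k$-point $s_\infty$ with the desired properties. The main obstacle is the second step: controlling the generic fiber of the lifted family so that it truly is comb-like with a handle extending to a section -- this relies on openness of the comb-like locus and on the fact that, by construction, $\tilde\varphi$ composes with $\iota_D$ to give back the originally chosen $\pis{D/k}$-arc, so the restriction-to-$F$ data are preserved through the lifting.
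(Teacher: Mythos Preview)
Your proposal is correct and follows essentially the same strategy as the paper: attach very free teeth so that $\iota_D$ is smooth at $[C']$, observe that $\iota_D([C'])=\iota_D([C])$ since the new teeth miss $D$, restrict to the open comb-like locus, and then extract the section from the handle of a nearby comb-like curve whose pseudo-ideal on $D$ agrees with $s_F$. The only difference is in the endgame: the paper forms the fiber product $T\times_{\pis{D/k}}\text{Hilb}_{X/k}$, uses smoothness at $(t,[C'])$ to conclude dominance onto $T$, and directly picks a $k$-point $(t_\infty,[C_\infty])$ with $t_\infty\neq t$, whereas you lift along a henselized DVR and then descend the resulting $K'$-section to $k$ via nonemptiness of the section scheme---a slightly longer but equally valid route.
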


\begin{proof}
Since none of the very free curves attached to $C$ intersect $D$,
$\iota_D([C])$ equals $\iota_D([C'])$.  Let $(T,t)$ be as in
Definition ~\ref{defn-comblike}.  Form the pullback morphism
$$
\iota_{D,T}:T\times_{\psi{D/k},\iota_D} \text{Hilb}_{X/k} \rightarrow
T.
$$
By Proposition ~\ref{prop-comblike}, this morphism is smooth at
$(t,[C'])$.  By Lemma
~\ref{lem-comblike}, there is an open neighborhood $U$ of $(t,[C'])$
precisely parameterizing pairs for which the closed subscheme is a
comb-like curve.  The restriction of $\iota_D$ to this open subscheme
is also smooth at $(t,[C'])$.  Thus it is dominant.  So there exists a
$k$-point $(t_\infty,[C_\infty])$
in this open subscheme such that $t_\infty \neq t$.  Therefore the
pseudo-ideal sheaf of $C_\infty$ agrees with $s_F$.  Let $s_\infty$ be
the unique section of $\pi$ such that $s_\infty(B)$ is an irreducible
component of $C_\infty$.  Then the ideal sheaf of $s_\infty(E_F)$ is a
subsheaf of the pseudo-ideal sheaf of $C_\infty$ over $E_F$.   But two
ideal sheaves of sections can be contained one in the other if and
only if they are equal.  Therefore the restriction of $s_\infty$ to
$E_F$ is the given section over $E_F$.  In particular, the restriction
of $s_\infty$ to $F$ equals $s_F$.1
\end{proof}


\section{Proofs of the main theorems} \label{sec-proofs}
\marpar{sec-proofs}

\begin{proof}[Proof of Theorem ~\ref{thm-main}]
Let $E'$ be an effective divisor in $B$ containing $E$ and such that
for the inverse image Cartier divisor $D=\pi^{-1}(E')$, $C$ is
contained in the scheme-theoretic union $s(B) \cup D$.  The \'etale
local deformation of $C$ to section curves agreeing with $s_E$ in
particularly restricts on $D$ to a deformation of the pseudo-ideal
sheaf of $C$ to ideal sheaves of section curves agreeing with $s_E$.
Thus the theorem follows from Corollary ~\ref{cor-comblike}.  
\end{proof}

\begin{proof}[Proof of Proposition ~\ref{prop-Hassett}]
First we consider the case when the $\OO$-morphism
$$
f:\PP^1_K \rightarrow \SP \OO \times_B X
$$
does not necessarily extend to all of $\PP^1_\OO$.  

\begin{claim} \label{claim-Hassett1}
\marpar{claim-Hassett1}
There exists an
$E$-conic $\nu:P\rightarrow \PP^1\times_k B$ such that the composition
$$
f\circ \nu: \SP K \times_B P \rightarrow \PP^1_K \rightarrow \SP \OO
\times_B X
$$
extends to an $\OO$-morphism
$$
f_P: \SP \OO\times_B P \rightarrow \SP \OO \times_B X.
$$
\end{claim}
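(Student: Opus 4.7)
The plan is to resolve the indeterminacy of $f$, viewed as a rational map $\PP^1_\OO \dashrightarrow \SP\OO \times_B X$, by finitely many blowups on the regular surface $\PP^1_\OO$, and then to realize the resulting modification globally on $\PP^1 \times_k B$ as an $E$-conic.

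By hypothesis $\OO$ is a Henselian, resp.\ complete, discrete valuation ring with residue field $k$. The morphism $f:\PP^1_K \to \SP\OO \times_B X$ defines a rational map $\widetilde{f}:\PP^1_\OO \dashrightarrow \SP\OO \times_B X$ which is defined at least on the generic fiber $\PP^1_K$. Since the target is proper over $\SP\OO$ and the source is a $2$-dimensional regular Noetherian scheme, the valuative criterion forces the indeterminacy locus of $\widetilde{f}$ to have codimension $\geq 2$; it is therefore a finite set of closed points, all lying in the closed fiber $\PP^1_k$ over $b$.

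By Zariski's theorem on elimination of indeterminacy for rational maps from a regular $2$-dimensional Noetherian scheme to a proper scheme, a finite sequence of blowups at closed points
$$
P_\OO = S_n \xrightarrow{\sigma_n} \cdots \xrightarrow{\sigma_1} S_0 = \PP^1_\OO
$$
eliminates the indeterminacy of $\widetilde{f}$, yielding an $\OO$-morphism $f_{P_\OO}: P_\OO \to \SP\OO \times_B X$. Every center $p_i$ necessarily lies in the preimage of the closed fiber over $b$, and because $k$ is algebraically closed with $k(b)=k$, each $p_i$ is a $k$-rational point lying on the corresponding iterated blowup of $\PP^1 \times_k B$ at $k$-rational points of $\PP^1 \times_k \{b\}$ and its successive exceptional divisors. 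Performing the analogous sequence of blowups globally on $\PP^1 \times_k B$ therefore produces a smooth projective $k$-surface $P$ together with a birational morphism $\nu:P \to \PP^1\times_k B$ that is an isomorphism over $\PP^1 \times_k (B - \{b\})$; thus $P$ is an $E$-conic. Flat base change canonically identifies $\SP\OO \times_B P$ with $P_\OO$, and under this identification $f_{P_\OO}$ supplies the required morphism $f_P$ extending $f \circ \nu$ on the generic fiber.

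The only substantive step is the elimination of indeterminacy, which is classical for regular $2$-dimensional schemes (take the closure of the graph of $\widetilde{f}$ in $\PP^1_\OO \times_\OO (\SP\OO \times_B X)$ and normalize/desingularize by successive blowups). The passage from the local sequence of blowups over $\SP\OO$ to a global sequence over $B$ is automatic, since the blowup of a Noetherian scheme at a $k$-rational closed point commutes with the flat base change $\SP\OO \to B$ and the relevant closed points correspond canonically on both sides.
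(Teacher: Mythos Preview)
Your argument is correct and follows essentially the same strategy as the paper: resolve the indeterminacy of $f$ as a rational map $\PP^1_\OO \dashrightarrow \SP\OO\times_B X$ by a birational modification, then globalize that modification to $\PP^1\times_k B$.  The paper packages this slightly differently: it takes the closure $\Sigma$ of the graph of $f$, applies Lipman's resolution of $2$-dimensional singularities to obtain a smooth $\widetilde{\Sigma}\to\PP^1_\OO$, observes that any projective birational morphism to $\PP^1_\OO$ is the blowup of a single coherent ideal sheaf $\mc{I}_\OO$ supported on the closed fiber, and globalizes by noting that such an ideal sheaf is the basechange of an ideal sheaf $\mc{I}$ on $\PP^1\times_k B$.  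Your version instead factors the modification as a finite sequence of point blowups at $k$-rational points and globalizes each blowup individually.  Both routes rest on the same underlying input (resolution/factorization for regular surfaces over a DVR); your formulation has the mild advantage that smoothness of the resulting $P$ is manifest, whereas the paper's one-shot ideal-sheaf blowup implicitly uses faithfully flat descent of regularity to see that $P$ is smooth.
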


\mni
To prove this, first form the closure $\Sigma$
of the graph of $f$ in $\PP^1_\OO
\times_B X$.  This gives a possibly singular, $2$-dimensional scheme
whose projection onto $\PP^1_\OO$ is projective and birational.  By
resolution of 
singularities for $2$-dimensional schemes, e.g. as in \cite{Lipman},
there exists 
a projective desingularization of $\Sigma$.  The composition with
projection onto $\PP^1_\OO$ is a projective, birational morphism, thus
equals the blowing up of $\PP^1_\OO$
along a coherent ideal sheaf $\mc{I}_\OO$
supported on the closed fiber.  Since
every such ideal sheaf $\mc{I}_\OO$ equals the basechange of a coherent ideal
sheaf $\mc{I}$ on $\PP^1\times_k B$ supported on $\PP^1\times_k \{b\}$, the
blowing up of $\PP^1_\OO$ along $\mc{I}_\OO$
equals $\SP \OO \times_B P$ where $P$ is the
$E$-conic obtained by blowing up $\PP^1\times_k B$ along $\mc{I}$.
This proves Claim ~\ref{claim-Hassett1}.

\begin{claim} \label{claim-Hassett2}
\marpar{claim-Hassett2}
Denote the graph of $f_P$ by
$$
\Gamma_{f_P}:\SP \OO \times_B P \rightarrow \SP \OO \times_B
(X\times_B P).
$$
Each comb-like curve $C$ in $P$ which equals $s(B)$ over
$B-\{b\}$ equals the isomorphic projection of a unique 
comb-like curve $C_\Gamma$ in $X\times_B P$ 
whose basechange to $\SP \OO \times_B
(X\times_B P)$ equals $\Gamma_{f_P}(\SP \OO\times_B C)$.
\end{claim}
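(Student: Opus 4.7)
The plan is to construct $C_\Gamma$ as the image of a section $\tau:C\rightarrow X\times_B C$ of the projection $q:X\times_B C\rightarrow C$ obtained by restricting $X\times_B P\rightarrow P$ along $C\hookrightarrow P$. Setting $C_\Gamma:=\tau(C)$ makes the ``isomorphic projection to $C$'' condition automatic, and separatedness of $q$ reduces uniqueness of $C_\Gamma$ to uniqueness of $\tau$ once its basechange to $\SP\OO\times_B C$ is specified.

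To build $\tau$, I would use the faithfully flat, quasi-compact cover $\SP\OO\sqcup(B-\{b\})\rightarrow B$ (faithful flatness follows from that of $\OO_{B,b}\rightarrow\OO$, in both the Henselian and completion cases) and apply fpqc descent of sections. Over $\SP\OO$, take $\tau_\OO$ to be $\Gamma_{f_P}$ restricted to the closed subscheme $\SP\OO\times_B C\hookrightarrow\SP\OO\times_B P$; its image in $\SP\OO\times_B(X\times_B P)$ is by definition $\Gamma_{f_P}(\SP\OO\times_B C)$. Over $B-\{b\}$, the hypothesis that $C$ coincides there with $0_P(B)$ lets me define $\tau_{B-\{b\}}$ via the handle section $(s,0_P):B\rightarrow X\times_B P$. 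The two local sections agree on the overlap $\SP K\times_B C=0_P(\SP K)$ because $\Gamma_{f_P}(0_P(\eta_B))=(f(0_P(\eta_B)),0_P(\eta_B))=(s(\eta_B),0_P(\eta_B))$, where the last equality uses the hypothesis that $f$ sends $0\in\PP^1_K$ to the germ of $s$; separatedness of $q$ makes the cocycle condition for the descent of morphisms automatic, so fpqc descent yields a unique global section $\tau$.

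Finally I verify that $C_\Gamma:=\tau(C)$ is comb-like: its handle is $(s,0_P)(B)$ (extracted from the generic part of $\tau$), and its teeth (which can lie only in the fiber over $b$ since $C$ equals $0_P(B)$ elsewhere) are the lifts of teeth of $C$ via $(f_P|_b,\text{Id})$ on $X_b\times P_b$. The regular immersion property for $C_\Gamma\hookrightarrow X\times_B P$ should follow locally by adjoining, to a regular sequence defining $C\subset P$, the $\dim X$ equations cutting out the graph of $f_P$, which realize $\SP\OO\times_B P\hookrightarrow\SP\OO\times_B(X\times_B P)$ as a regularly-immersed section of the projection to $\SP\OO\times_B P$. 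The main obstacle I anticipate is verifying this regular-immersion condition at the teeth in the fiber over $b$, which requires that the closure $S$ of the image of $f$ meet $\pi^{-1}(b)$ only at smooth points of $X$—precisely the hypothesis imposed in Proposition~\ref{prop-Hassett}.
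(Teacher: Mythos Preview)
Your strategy of realizing $C_\Gamma$ as the image of a descended section $\tau$ is sound, but the descent step has a gap. For the fpqc cover $\SP\OO\sqcup(B-\{b\})\to B$, the sheaf condition requires agreement on \emph{all} pairwise fiber products, not just the cross-term $\SP K\times_B C$: since $\SP\OO\to B$ is not a monomorphism (neither for the Henselization nor the completion), the self-product $\SP(\OO\otimes_{\OO_B}\OO)\times_B C$ strictly dominates $\SP\OO\times_B C$, and the condition that the two pullbacks of $\tau_\OO$ to it agree is nontrivial --- it is exactly the statement that $\tau_\OO$ already comes from $C$. Your claim that ``separatedness of $q$ makes the cocycle condition automatic'' conflates two things: for morphisms to a sheaf there is indeed no triple-overlap cocycle condition, but the pairwise compatibility on the $(i,i)$ term remains, and separatedness is irrelevant to it.

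The paper sidesteps descent entirely. It chooses $r$ so that $C$ lies in the scheme-theoretic union of the handle and the Artinian thickening $(r\cdot b)\times_B P$, and then uses that $\SP\OO\times_B(r\cdot b)\to(r\cdot b)$ is an isomorphism: the ideal sheaf of $\Gamma_{f_P}(\SP\OO\times_B C)$, viewed inside the analogous union $Z\subset X\times_B P$, is supported on a subscheme unchanged by the base change from $B$ to $\SP\OO$, so it descends for free. This elementary observation is precisely what would verify your missing self-overlap condition, so once repaired your argument reduces to the paper's. Your handling of the regular-immersion property is close to the paper's (graph of a section into the smooth locus, composed with the inclusion of a Cartier divisor), but you should make explicit, as the paper does, that regularity of the immersion can be checked after base change to $\SP\OO$, since that is where your regular sequence actually lives.
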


\mni
There exists a nonnegative integer $r$ such that $C$ is contained in
the scheme-theoretic union of $s(B)$ and $(r\cdot b)\times_B P$.
Denote by $Z$ the scheme-theoretic union of $s(B)$ and $(r\cdot
b)\times_B (X\times_B P)$.  The pullback of $\OO_Z$ to $\SP \OO
\times_B (X\times_B P)$ equals the scheme-theoretic union of the germ
of $s(B)$ and $(r\cdot b)\times_B (X\times_B P)$.  The ideal sheaf
$\mc{I}$
of $\Gamma_{f_P}(\SP \OO\times_B C)$ in $\SP \OO \times_B Z$ is a
coherent subsheaf supported entirely on $(r\cdot b)\times_B (X\times_B
P)$.  Thus there exists a unique coherent ideal sheaf
$\mc{I}_{C_\Gamma}$ in $\OO_Z$ supported entirely on $(r\cdot
b)\times_B (X\times_B P)$ whose pullback to $\SP \OO \times_B
(X\times_B P)$ equals $\mc{I}$.  The corresponding closed subscheme
$C_\Gamma$ of $Z$ is the unique closed subscheme whose basechange to
$\SP \OO \times_B (X\times_B P)$ equals $\Gamma_{f_P}(\SP \OO\times_B
C)$.  Moreover, because the projection of $\Gamma_{f_P}(\SP \OO
\times_B C)$ into $\SP \OO\times_B P$ equals $\SP \OO \times_B C$,
also the projection of $C_\Gamma$ into $P$ equals $C$.

\mni
Finally, since $\SP \OO\times_B X$ is smooth along $S$, also $\SP \OO
\times_B (X\times_B P)$ is smooth along $\Gamma_{f_P}(\SP \OO \times_B
P)$.  Thus $\Gamma_{f_P}$ is a regular immersion.  Since $\SP
\OO\times_B C$ is a Cartier divisor in $\SP \OO \times_B P$, also
$\Gamma_{f_P}(C)$ is regularly immersed in $\SP \OO \times_B P$.  Thus
$C_\Gamma$ is regularly immersed \'etale locally, resp. formally
locally, near $\{b\} \times_B (X\times_B P)$.  Now the property of
being a regular immersion is a formal local property since the Koszul
cohomology relative to the complete local ring of the images of a
minimal set of generators of the stalk of the ideal equals the
completion of the Koszul cohomology relative to the local ring of the
minimal set of generators.  Thus $C_\Gamma$ is a regular immersion at
every point of $C_\Gamma$ in $\{b\}\times_B (X\times_B P)$.  Since
also
$X\times_B P$ is smooth along $(s,0_P)$, $C_\Gamma$ is regularly
immersed at every point of $C_\Gamma$, i.e., $C_\Gamma$ is a comb-like
curve in $X\times_B P$.  This proves Claim ~\ref{claim-Hassett2}. 

\begin{claim} \label{claim-Hassett3}
\marpar{claim-Hassett3}
If there exists a comb-like curve $C$ in $P$ with handle $0_P$ which
\'etale locally deforms to section curves agreeing with the
restriction $\infty_P|_E$, then there exists a comb-like curve $C$ in
$X\times_B P$ with handle $(s,0_P)$ which \'etale locally deforms to
section curves agreeing with an $E$-section $(s_E,t_E)$.
\end{claim}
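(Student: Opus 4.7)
The plan is to take $t_E := \infty_P|_E$, where $\infty_P$ denotes the strict transform in $P$ of the $\infty$-section of $\PP^1\times_k B\to B$; this is a section of $P\to B$ because $\nu$ is an isomorphism away from $E$ and the $\infty$-section meets every fiber of $P\to B$ transversally in exactly one point. Let $C_\Gamma$ be the comb-like curve in $X\times_B P$ obtained from $C$ by Claim \ref{claim-Hassett2}, so $C_\Gamma$ has handle $(s,0_P)$. I will argue that any étale (resp.\ formal) local deformation of $C$ to section curves of $P\to B$ agreeing with $\infty_P|_E$ can be transported, via the graph morphism $\Gamma_{f_P}$, to an étale (resp.\ formal) local deformation of $C_\Gamma$ to section curves of $X\times_B P\to B$ agreeing with the $E$-section $(s_E,t_E)$.

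Concretely, let $(R,\mf{n})$ be the DVR and $\mc{C}\subset \SP R\times_{\SP k}\SP\OO\times_B P$ the $R$-flat family witnessing the hypothesis, where $\OO$ stands for $\OO_{B,b}^h$ (resp.\ $\widehat{\OO}_{B,b}$). Since $\SP \OO\times_B X$ is smooth along the closure $S$, the argument at the end of Claim \ref{claim-Hassett2} shows that $\Gamma_{f_P}:\SP\OO\times_B P\to \SP\OO\times_B(X\times_B P)$ is a regular, hence closed, immersion. Define
\[
\mc{C}_\Gamma := (\mathrm{id}_{\SP R}\times \Gamma_{f_P})(\mc{C}).
\]
This is a closed subscheme $R$-isomorphic to $\mc{C}$, and so is $R$-flat. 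The two conditions of Definition \ref{defn-jet} are then verified fiberwise: over the residue field of $R$, the image of the base change of $C$ under $\Gamma_{f_P}$ equals $\SP \OO\times_B C_\Gamma$ by the defining characterization of $C_\Gamma$ in Claim \ref{claim-Hassett2}; over the fraction field $K(R)$, the generic fiber of $\mc{C}$ is the image of a section $s_\eta$ of $P\to B$ with $s_\eta|_E = \infty_P|_E$, and its image under $\Gamma_{f_P}$ is the image of the section $(f_P\circ s_\eta,s_\eta)$ of $X\times_B P\to B$ whose restriction to $E$ is $(f_P\circ \infty_P|_E,\infty_P|_E)$. By hypothesis on $f$, the extension of $f\circ \infty$ to $\SP\OO$ agrees with $s_E$, so this restriction equals $(s_E,t_E)$ as required.

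The only real subtlety is the compatibility of formation of the image with base change along $\SP R\to \SP k$, and the claim that $(f_P\circ \infty_P)|_E = s_E$; the first holds because $\Gamma_{f_P}$ is a closed immersion (formation of scheme-theoretic images along closed immersions commutes with arbitrary base change), and the second holds by construction of $f_P$ as the extension of $f$ to $\SP\OO\times_B P$ together with the hypothesis that the unique extension $s'$ of $f(\infty)$ agrees with $s_E$. No further comb-like structure needs to be verified on the deformation itself, since it is $R$-isomorphic to $\mc{C}$ whose generic fiber is a section curve and whose closed fiber is comb-like by Claim \ref{claim-Hassett2}.
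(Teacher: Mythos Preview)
Your proof is correct and follows essentially the same approach as the paper: define $(s_E,t_E)$ via $\Gamma_{f_P}\circ\infty_P|_E$, take $C_\Gamma$ from Claim~\ref{claim-Hassett2}, and push the \'etale-local DVR family $\mc{C}$ forward through the closed immersion $\Gamma_{f_P}$ to obtain the required deformation of $C_\Gamma$. Your write-up is in fact more explicit than the paper's in unpacking Definition~\ref{defn-jet} and in checking that $f_P\circ\infty_P$ restricts to $s_E$ on $E$; the one minor imprecision is the justification that $\infty_P$ is a section of $P\to B$ (the reason is that the strict transform of a smooth section under a blow-up of a smooth surface remains isomorphic to $B$, not that it meets every fiber transversally), but this does not affect the argument.
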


\mni
There is a unique $E$-section of $X\times_B P$ such that
$\Gamma_{f_P}\circ \infty_P$ equals $(s_E,t_E)$.  
By Claim ~\ref{claim-Hassett2}, there exists a comb-like curve
$C_\Gamma$ in $X\times_B P$ with handle $(s,0_P)$ whose basechange to
$\SP \OO \times_B (X\times_B P)$ equals $\Gamma_{f_P}(\SP \OO \times_B
C)$.  If $\SP \OO \times_B C$ deforms to a family of section curves $D_t$ in
$\SP \OO \times_B P$ agreeing with $\infty_P|_E$, then
$\Gamma_{f_P}(\SP \OO \times_B C)$ deforms to the family of section
curves $\Gamma_{f_P}(D_t)$ in $\SP \OO \times_B (X\times_B P)$
agreeing with $\Gamma_{f_P}\circ \infty_P = (s_E,t_E)$.  This proves
Claim ~\ref{claim-Hassett3}.

\begin{claim} \label{claim-Hassett4}
\marpar{claim-Hassett4}
There exists a comb-like curve $C$ in $P$ with handle $0_P$ which
Zariski locally deforms to section curves in $P$ agreeing with $\infty_P|_E$.  
\end{claim}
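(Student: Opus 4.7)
The plan is to construct an explicit comb-like curve $C$ on $P$ with the required formal-local deformation property, then invoke Theorem \ref{thm-main} applied to $\pi_P : P \to B$ to upgrade to Zariski-local deformation. By Remark \ref{rmk-conic}(ii), I may assume $\text{Supp}(E) = \{b\}$ is a single point. Since $\nu : P \to \PP^1 \times_k B$ is a sequence of blowups supported over $b$, the special fiber $F_b = \pi_P^{-1}(b)$ is a connected tree of smooth rational curves, and the sections $0_P$ and $\infty_P$ meet $F_b$ transversally at distinct points $p_0$ and $p_\infty$. Let $T \subset F_b$ be the unique minimal chain of components joining $p_0$ to $p_\infty$, and set $C = 0_P(B) \cup T$. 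This is a connected nodal curve on the smooth surface $P$ whose only component dominating $B$ is $0_P(B)$, so $C$ is a comb-like curve with handle $0_P$.

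To produce the required formal-local deformation of $C$ to section curves agreeing with $\infty_P|_E$, I work in local coordinates $(x, t)$ near $(p_0, b)$ in which $0_P(B) = \{x = 0\}$ and $F_b = \{t = 0\}$, so that $C$ restricts to $\{xt = 0\}$. The one-parameter family $\{xt = \epsilon\}$ for $\epsilon$ ranging over a DVR is flat, with special fiber $C$ and generic fiber the graph of $x = \epsilon / t$, a section of $\PP^1 \times_k B \to B$ meeting $F_b$ at $x = \infty$, i.e., at $p_\infty = \infty_P(b)$. Taking strict transforms under $\nu$ extends this to a deformation of $C$ formally locally at $F_b \subset P$ whose generic member is a section of $\pi_P$ agreeing with $\infty_P|_E$. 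One could alternatively argue cohomologically by verifying $H^1(C, \OO_P(-D) \cdot N_{C/P}) = 0$ (since $\OO_P(-D) \cdot N_{C/P}$ is a line bundle on the nodal curve $C$ on the smooth surface $P$, this reduces to a component-by-component check via the normalization sequence), invoking Proposition \ref{prop-inf} for smoothness of $\iota_D$ at $[C]$, and then exhibiting the scalar family $(\mc{I}_C|_D, \epsilon \cdot u_\infty)$ as the required deformation of pseudo-ideals, where $u_\infty$ factors through the inclusion at $\infty_P(b)$.

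With formal-local deformation in hand, Theorem \ref{thm-main} applies to $\pi_P : P \to B$: the geometric generic fiber $\PP^1$ contains very free rational curves in its smooth locus (the identity map has $f^* T_{\PP^1} = \OO(2)$, which is ample), and $0_P$ maps the generic point of $B$ into this locus. The theorem thus produces, after attaching sufficiently many very free fiber teeth to $C$, a Zariski-local deformation of the enlarged curve to section curves agreeing with $\infty_P|_E$, which is exactly the conclusion of Claim \ref{claim-Hassett4}. The main obstacle, if the cohomological route is taken, is that interior components of the chain $T$ have self-intersection $-2$ in $P$, so the positivity furnished by $\OO_P(-D)$ just barely suffices to force the required $H^1$ vanishing and one has to track the numerics carefully; the explicit local-coordinate approach above avoids this bookkeeping entirely.
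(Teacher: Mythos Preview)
Your chosen curve $C = 0_P(B) \cup T$, with $T$ the reduced minimal chain of fiber components from $p_0$ to $p_\infty$, is in the wrong divisor class and cannot deform---even formally locally---to a section meeting the fiber at $p_\infty$.  The obstruction is numerical: in any flat family of effective Cartier divisors on the smooth surface $P\times_B B'$, the intersection number with each proper fiber component is constant.  Take the simplest nontrivial case, $P$ the blowup of $\PP^1\times_k B$ at $(0,b)$, with exceptional curve $E_1$ and strict-transform fiber $F'$.  Then $0_P$ meets $E_1$, $\infty_P$ meets $F'$, your chain is $T=E_1+F'$, and one computes $C\cdot E_1 = 0_P\cdot E_1 + E_1^2 + F'\cdot E_1 = 1-1+1 = 1$ while $C\cdot F' = 0+1-1=0$.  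Any section in the same family must therefore meet $E_1$ and miss $F'$, so it cannot pass through $p_\infty\in F'$.  The local-coordinate family $\{xt=\epsilon\}$ you write down lives on $\PP^1\times_k B$, and neither its strict transform (which is $0_P(B)+F'$, typically disconnected from $0_P(B)$) nor its total transform (which is $0_P(B)+2E_1+F'$, with multiplicity) has your $C$ as special fiber.  For non-reduced $E=n\cdot b$ the problem is worse: the section $y=t/\epsilon$ only agrees with $\infty_P$ to first order, and the correct special fiber must carry the fiber with multiplicity $n$.

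The paper's argument is quite different and bypasses Theorem~\ref{thm-main} entirely.  One observes that $\OO_P(\infty_P(B)-0_P(B)-\pi^*E)$ has trivial restriction to the generic fiber, so its pushforward to $B$ is invertible; twisting by some effective $\Delta$ supported away from $E$ produces a global section, i.e.\ an effective vertical divisor $V$ with
\[
\infty_P(B)+\pi^*\Delta \ \sim\ 0_P(B)+\pi^*E+V.
\]
Set $C = 0_P(B)+\pi^*E+V$ and take the pencil spanned by these two linearly equivalent divisors.  The base locus contains $\infty_P(B)\cap\pi^*E = \infty_P(E)$, so every member contains $\infty_P(E)$; since $\infty_P(B)+\pi^*\Delta$ is a section curve over a neighborhood of $E$, all but finitely many pencil members are as well, and they agree with $\infty_P|_E$.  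This is already a global (hence Zariski-local) deformation.
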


\mni
On $P$
consider the invertible sheaf $\mc{L} = \OO_P(\infty_P(B) - 0_P(B) -
\pi^*E)$. Since the restriction to the generic fiber of $P\rightarrow
B$ is a degree $0$ invertible sheaf on $\PP^1$, it has a
$1$-dimensional space of global sections.  Thus the pushforward of
$\mc{L}$ to $B$ is a torsion-free, coherent $\OO_B$-module of rank
$1$, i.e., it is an invertible sheaf.  Thus there exists an effective
divisor $\Delta$ in $B$, not intersecting $E$ such that
$\OO_P(\infty_P(B) - 0_P(B) +\pi^* \Delta - \pi^* E)$ has a global section.
In other words, there is an effective divisor $B$ in $P$, necessarily
vertical, such that
$$
\infty_P(B) + \pi^* \Delta = 0_P(B) + \pi^* E + B.
$$
Define $C$ to be $0_P(B) + \pi^* E + B$ and let the curves $D_t$ be
the members of the pencil spanned by $C$ and $\infty_P(B) +
\pi^*\Delta$.  Since $\infty_P(B)+\pi^*\Delta$ is a section curve over
$E$, all but finitely many members of this pencil are section curves
over $E$.  Since the base locus of the pencil contains
$\infty_P(B)\cap \pi^* E$, these section curves agree with
$\infty_P(B)$ over $E$.  This proves Claim ~\ref{claim-Hassett4},
completing the proof of the proposition.  

\mni
Of course if $f$ extends to a closed immersion
$$
f_\OO:\SP \OO\times_k \PP^1 \rightarrow \SP \OO \times_B X,
$$
then one can take $P$ to be $\PP^1\times_k B$ and one can work with
$f_\OO$ instead of $\Gamma_{f_P}$.  In this way one produces a
comb-like curve in $X$ which \'etale locally, resp. formally locally,
deforms to section curves in $X$ agreeing with $s_E$.  
\end{proof}

\begin{proof}[Proof of Corollary ~\ref{cor-HT}]
First of all, the results of the first paragraph follow from the
results of the second, since every pair of points in a smooth,
projective, separably rationally connected variety are connected by a
very free rational curve, which may also be taken to be immersed if
the dimension is $\geq 3$.  Next, if there is a very free rational
curve in $\pi^{-1}(b)$
containing $s(b)$ and $s_E(b)$, then deformations of the
rational curve in fibers of $\pi$ and intersecting both $s$ and $s_E$
are unobstructed.  Thus there exist both \'etale local and formal
local morphisms
$$
f_\OO:\SP \OO \times_k \PP^1 \rightarrow \SP \OO \times_B X
$$
whose closed fiber is the given very free rational curve in
$\pi^{-1}(b)$ and whose $0$-section, resp. $\infty$-section, is the
basechange of $s$, resp. agrees with $s_E$.  If the original very free
rational curve in $\pi^{-1}(b)$ is an immersion, then also $f_\OO$ is an
immersion.  So the corollary follows from Theorem ~\ref{thm-main} and
Proposition ~\ref{prop-Hassett}. 
\end{proof}

\begin{proof}[Proof of Corollary ~\ref{cor-rational}]
Let $U$ be a dense Zariski open subset of $\PP^n_{\SP
  \widehat{K}_{B,b}}$ 
which is isomorphic to a dense Zariski open subset of
$\SP \widehat{K}_{B,b} \times_B( X\times_k \PP^r_k)$.  Possibly after
deforming $s$, we may assume the $\widehat{K}_{B,b}$-point of $\SP
  \widehat{K}_{B,b}\times_B X$ determined by 
$s$ is
the image of a $\widehat{K}_{B,b}$-point of $\SP \widehat{K}_{B,b}
  \times_B( X\times_k \PP^r_k)$ which is
contained in $V$.
Similarly, there exists a formal section of $\SP \widehat{\OO}_{B,b}
  \times_B X$ agreeing with $s_E$ whose $\widehat{K}_{B,b}$-point is
the image of a
$\widehat{K}_{B,b}$-point of $V$.  The isomorphism of $U$ and $V$ sends
these two $\widehat{K}_{B,b}$-points of $V$ two $\widehat{K}_{B,b}$
points of $U$.  Every pair of $\widehat{K}_{B,b}$-points of
  $\PP^n_{\SP \widehat{K}_{B,b}}$ are contained in a line which is
  isomorphic to $\PP^1_{\SP \widehat{K}_{B,b}}$.  The composition with
  the isomorphism from $U$ to $V$ determines a $\widehat{K}_{B,b}$-rational map
$$
f_K:\PP^1_{\SP \widehat{K}_{B,b}} \dashrightarrow 
\SP \widehat{K}_{B,b} \times_B( X\times_k \PP^r_k)
$$
which extends to all of $\PP^1_{\SP \widehat{K}_{B,b}}$ by the
valuative criterion of properness.  Now apply Theorem ~\ref{thm-main}
and Proposition ~\ref{prop-Hassett}.  
\end{proof}

\begin{proof}[Proof of Corollary ~\ref{cor-cubic}]
Let $b$ be a point of $B$ for which $X_b$ is a cubic surface with
rational double point singularities.  The Galois group of
$\widehat{K}_{B,b}$ is topologically cyclic.  The action on the
N\'eron-Severi lattice of the geometric generic fiber
$\SP \overline{\widehat{K}}_{B,b} \times_B X$ sends a topological
generator to an element in the automorphism group of the
N\'eron-Severi lattice, namely $\mc{W}(E_6)$.  The conjugacy class of
this element in $\mc{W}(E_6)$ is well-defined independent of the
choice of topological generator.  We call it the \emph{monodromy class}.

\mni
The possibilities for the monodromy class
are listed in
\cite[Table 1, p. 176]{CubicForms}.  As explained in \cite[31.1(2),
p. 174]{CubicForms}, the cubic surface $\SP \widehat{K}_{B,b} \times_B
X$ is minimal if and only if the monodromy class is in one of the
first $5$ rows of the table, and the minimal model is a Del Pezzo of
degree $\geq 5$ unless it is one of the first $11$ rows.  Since every
smooth Del Pezzo of degree $\geq 5$ is rational, Corollary
~\ref{cor-rational} applies unless the monodromy class is in one of
the first 11 rows.

\begin{claim} \label{claim-cubic1}
\marpar{claim-cubic1}
If the cubic surface $X_b$ has only rational double points, and no
greater than $3$ of these, then the monodromy class is not in one of
the first $11$ rows.
\end{claim}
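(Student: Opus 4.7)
The plan is a case-by-case check of the 11 conjugacy classes in rows 1--11 of Manin's table. The key geometric input is the following specialization picture: if $X_b$ is a cubic with only rational double points, its minimal resolution $\widetilde{X}_b \to X_b$ is a weak Del Pezzo surface of degree three with Picard lattice of rank seven, and the orthogonal complement $K^{\perp}_{\widetilde{X}_b} \cong E_6$ contains the sub-root-system $R$ spanned by the exceptional $(-2)$-curves over the singular points. The irreducible ADE components of $R$ are in bijection with the singular points of $X_b$, and their types record the types of the corresponding RDPs. Under the lattice isomorphism between $\mathrm{Pic}$ of the smooth geometric generic fiber and $\mathrm{Pic}(\widetilde{X}_b)$ coming from specialization, the Galois action transports to a $\mc{W}(E_6)$-action preserving the canonical class and the subset $R$; its $\langle w\rangle$-orbits on the irreducible components of $R$ correspond to the Galois orbits of singular points of $X_b$.

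This reduces the claim to a purely combinatorial statement inside $E_6$: for each of the 11 conjugacy classes $w \in \mc{W}(E_6)$ in question, there is no ADE sub-root-system $R \subset E_6$ satisfying $w(R)=R$ such that the number of $\langle w\rangle$-orbits on the irreducible components of $R$ is at most three. I would verify this class by class. For classes of large order, the characteristic polynomial (and Coxeter number) from the table already rule out invariant subspaces of rank small enough to support a sub-root-system with few components; these cases are immediate. For the remaining intermediate classes, I would enumerate by hand the ADE sub-root-systems of $E_6$ up to $\mc{W}(E_6)$-conjugacy, check which are stabilised by $w$ (as a set, not pointwise), and count the orbits of $\langle w\rangle$ on the components.

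The main obstacle is the bookkeeping: for each of the 11 classes one needs to know the cycle structure of $w$ on the $72$ roots of $E_6$ (equivalently on the $27$ lines of the cubic surface), and simultaneously maintain a list of ADE sub-root-systems of $E_6$. This cross-referencing cannot be carried out from the characteristic polynomial alone; one needs either an explicit matrix representative of each Weyl element or a supplementary table such as the one in Manin's book recording the cycle structure of each class on the lines. Once this data is in hand, each row is dispatched by a short finite computation, establishing that every admissible $R$ with at most three orbits of components must come from a conjugacy class not in the first 11 rows, which completes the proof of the claim.
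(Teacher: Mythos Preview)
The paper's proof takes a completely different route from yours: it never enumerates sub-root-systems or conjugacy classes individually. Instead it counts Galois-fixed lines directly from column 8 of Manin's table. A line in the smooth locus of $X_b$ has normal bundle $\OO_{\PP^1}(-1)$, hence deforms uniquely to the generic fibre and therefore constitutes a size-one Galois orbit. For each double point $p$, the scheme of lines on $X_b$ through $p$ is cut out in the $\PP^2$ of lines through $p$ by the tangent-cone conic and the degree-three part of the local equation, so has length $6$; summing over the $m$ double points, the paper concludes that at most $6m$ of the $27$ lines (with multiplicity) pass through some singular point, leaving at least $27-6m$ lines in the smooth locus and hence at least $27-6m$ size-one orbits. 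Since each of rows 1--11 exhibits at most three size-one orbits (the $1^e$ entry in column~8 has $e\leq 3$), this forces $27-6m\leq 3$, i.e.\ $m\geq 4$. No case-by-case work is needed.

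Your proposed reduction, by contrast, does not work as stated. The combinatorial assertion you set out to verify---that no $w$ in rows 1--11 stabilises an ADE sub-root-system $R\subset E_6$ with at most three $\langle w\rangle$-orbits of irreducible components---is false: $R=E_6$ itself has a single component and is stabilised by every element of $\mc{W}(E_6)$, so the check fails immediately for every row. The condition $w(R)=R$ is far weaker than what the degeneration actually imposes: by Picard--Lefschetz the monodromy must lie in the reflection subgroup $\mc{W}(R)$ and act trivially on $R^{\perp}$. Without building that stronger constraint into the reduction, the case-by-case programme cannot get off the ground.
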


\mni
The $8^\text{th}$ column of each row lists the orbit decomposition of
the $27$ lines.  Each orbit specializes to a line in the singular fiber
whose multiplicity is at least as large as the size of the orbit.
More precisely, the multiplicity of each line in the singular fiber
equals the sum of the
lengths of the orbits specializing to that line.

\mni
Every line in the singular fiber not containing a singular point
necessarily has multiplicity $1$ since the normal bundle of a line in
the nonsingular locus of a cubic surface is isomorphic to 
the nonspecial invertible
sheaf $\OO_{\PP^1}(-1)$. 
Thus it remains to consider the multiplicities
of lines containing a rational double point.  For each double point,
the
length of the scheme
of lines in the cubic surface containing that double point is always
$6$.  The multiplicity of a line joining two or more 
double points equals the sum of the contributions from each double
point it contains.
Thus, if
there are $m$ rational double points, then the sum of the multiplicities of
the lines containing at least one double point equals $6m$.  Since all
other lines have multiplicity $1$, the
number of orbits of size $1$ must be at least $27-6m$.  Since every
orbit type in the first 11 rows begins with $1^e$ for $e\leq 3$, such
a monodromy class may occur only if the cubic surface has $4$ double
points.  This proves Claim ~\ref{claim-cubic1}.  Finally apply
Corollary ~\ref{cor-rational} to complete the proof.
\end{proof}

\bibliography{my}
\bibliographystyle{alpha}

\end{document}